\documentclass[10pt]{article}
\usepackage[a4paper,
  left=3.2cm,
  right=3.2cm,
  top=3.0cm,
  bottom=3.0cm]{geometry}
\usepackage{xcolor}
\usepackage[utf8]{inputenc}
\usepackage[T1]{fontenc}
\definecolor{mg}{rgb}  {0.85, 0.,  0.85}

\newcommand\scalemath[2]{\scalebox{#1}{\mbox{\ensuremath{\displaystyle #2}}}}

\usepackage[UKenglish]{babel}
\usepackage{amsmath,amssymb,amsthm}
\usepackage{mathrsfs}
\usepackage{verbatim}
\usepackage{newtxtext,newtxmath}
\usepackage{bm}
\usepackage{microtype}
\microtypesetup{protrusion=true,expansion=true}
\usepackage{graphicx}
\usepackage{caption}
\usepackage{subcaption}
\usepackage{bm}
\usepackage{dsfont}
\usepackage[normalem]{ulem} 
\usepackage[retainorgcmds]{IEEEtrantools}
\usepackage{tikz}
\usepackage{tikz-cd}
\usetikzlibrary{arrows}
\usetikzlibrary{arrows.meta,decorations.markings,patterns}
\usetikzlibrary{shapes.geometric}
\usetikzlibrary{backgrounds}
\tikzset{
  midarrow/.style={
    postaction={decorate},
    decoration={
      markings,
      mark=at position 0.5 with {\arrow{Stealth}}}}}
\usetikzlibrary{calc} 
\usetikzlibrary{positioning}

\usepackage{csquotes}
\usepackage{mathtools}
\usepackage{xfrac}
\usepackage[colorlinks]{hyperref}
\usepackage[capitalise,nameinlink]{cleveref}
\crefformat{equation}{(#2#1#3)}
\crefrangeformat{equation}{(#3#1#4) to~(#5#2#6)}
\crefmultiformat{equation}{(#2#1#3)}%
{ and~(#2#1#3)}{, (#2#1#3)}{ and~(#2#1#3)}
\makeatletter
\newcommand{\refcheckize}[1]{%
  \expandafter\let\csname @@\string#1\endcsname#1%
  \expandafter\DeclareRobustCommand\csname relax\string#1\endcsname[1]{%
    \csname @@\string#1\endcsname{##1}\@for\@temp:=##1\do{\wrtusdrf{\@temp}\wrtusdrf{{\@temp}}}}%
  \expandafter\let\expandafter#1\csname relax\string#1\endcsname
}
\newcommand{\refcheckizetwo}[1]{%
  \expandafter\let\csname @@\string#1\endcsname#1%
  \expandafter\DeclareRobustCommand\csname relax\string#1\endcsname[2]{%
    \csname @@\string#1\endcsname{##1}{##2}\wrtusdrf{##1}\wrtusdrf{{##1}}\wrtusdrf{##2}\wrtusdrf{{##2}}}%
  \expandafter\let\expandafter#1\csname relax\string#1\endcsname
}
\makeatother

\newcommand\ie{{\em i.e.}~}

\newcommand\N{\mathbb{N}}
\newcommand\T{\mathbb{T}}

\newcommand\R{\mathbb{R}}
\newcommand\Z{\mathbb{Z}}
\newcommand\C{\mathbb{C}}
\newcommand\U{\mathscr U}
\newcommand\V{\mathcal V}
\newcommand\A{\mathcal A} 
\newcommand\F{\mathcal F}
\newcommand\E{\mathcal E} 
\newcommand\CC{\mathcal C}
\newcommand\x{\mathrm x}
\newcommand\y{\mathrm y}
\newcommand\e{\mathrm e}
\newcommand\f{\mathrm f}
\newcommand\dd{\mathrm d}
\newcommand\xx{\mathfrak x}
\newcommand\ee{\mathfrak e}
\newcommand\ff{\mathfrak f}
\newcommand\XX{\mathfrak X}

\def\H{\mathcal H}
\def\nv{\mathscr{v}}
\def\ne{\mathscr{e}}
\def\nf{\mathscr{f}}
\def\hx{\hat{\mathfrak{x}}}
\def\he{\hat{\mathfrak{e}}}
\def\hf{\hat{\mathfrak{f}}}

\DeclareMathOperator{\rank}{rank}
\DeclareMathOperator{\Ran}{Ran}
\DeclareMathOperator{\Ker}{Ker}

\newtheorem{Theorem}{Theorem}[section]
\newtheorem{Remark}[Theorem]{Remark}
\newtheorem{Lemma}[Theorem]{Lemma}

\newtheorem{Proposition}[Theorem]{Proposition}
\newtheorem{Definition}[Theorem]{Definition}
\newtheorem{Example}[Theorem]{Example}
\crefname{Lemma}{Lemma}{Lemmas}
\Crefname{Lemma}{Lemma}{Lemmas}
\begin{document}

\title{
Dirac operators on periodic planar graphs: \\
the right definition advocated by flat bands
}


\author{K. Kato${}^1$, D. Parra${}^2$\footnote{Corresponding author: dparra@ubiobio.cl }${\ \;\! }$\footnote{Supported by ANID, Chile, through Subvención a la Instalación en la Academia, Convocatoria 2025, grant N° 82520044, and through FONDECYT de Iniciación grant N° 11261857}, S. Richard${}^3$\footnote{Supported by JSPS Grant-in-Aid for scientific research C no 26K06826}}

\date{\today}

\maketitle 

\begin{quote}
\begin{itemize}
\item[1] The Open University of Japan, 2-11 Wakaba, Mihama-ku, Chiba 261-8586, Japan
\item[2] Departamento de Matemática, Facultad de Ciencias, Universidad del Bío-Bío, Concepción, Chile
\item[3] Institute for Liberal Arts and Sciences \& Graduate School of Mathematics, Nagoya University, Furo-cho, Chikusa-ku, Nagoya 464-8601, Japan
\item[] E-mail: 2610089962@campus.ouj.ac.jp,  \\ 
${}\qquad \quad\ {}$ dparra@ubiobio.cl, \\ 
${}\qquad \quad\ {}$ richard@math.nagoya-u.ac.jp
\end{itemize}
\end{quote}


\begin{abstract}
Massive discrete Dirac-type operators on periodic planar graphs are investigated. 
Using Floquet--Bloch decomposition, we first show that neglecting the faces 
leads to spurious flat bands that are unrelated to the graph's geometry. 
Then, for the correct Dirac--Hodge operator $D_m$, we establish a spectral reduction expressing 
$\sigma(D_m)$ through the graph Laplacian on vertices $-\Delta_\V$ and the Laplacian on faces $-\Delta_\F$.
As a consequence, $D_m$ carries a flat band precisely when one of these scalar Laplacians does. 
We finally apply this reduction to the study of flat bands of the operator $D_m$ on Archimedean lattices 
and contrast the outcome with that on the Lieb lattice.
\end{abstract}
\bigskip

\noindent{\bf Keywords:} Discrete Dirac operator, Hodge Laplacian, Flat band, Periodic planar graphs, Archimedean lattices.

\section{Introduction}

Discrete differential structures have long linked geometry, topology, and spectral analysis. 
On a graph, the discrete exterior derivative maps functions on vertices to functions on edges.
When faces are considered, it also maps functions on edges to functions on faces, 
a chain that in principle continues to higher-dimensional cells. 
These maps, together with their adjoints, combine into a single first-order operator of Dirac type.
This construction has classical roots in the combinatorial Hodge theory of Eckmann \cite{MR13318} 
and the finite-difference approximation of Hodge theory by Dodziuk \cite{MR407872}.
The same structure underpins the discrete exterior calculus of computational modelling \cite{DKT08}.
More recently, this higher-order viewpoint has been investigated in network science, 
where Bianconi introduced a topological Dirac operator on the signals of nodes, links, and cells \cite{Bianconi_2021}, see also
\cite{Bi24}. The framework extends to arbitrary two-dimensional cell complexes \cite{Bianconi_2024}, 
and hence to the polygonal faces of the tilings considered below. 
We refer to \cite{MR4760798} for a review paper on these constructions.

Our interest in Dirac-type operators is based on the spectral theory of quantum Hamiltonians. 
Recall that the spectral theory of the graph Laplacian, a second-order operator acting on vertices alone, 
is by now a mature subject. On infinite graphs it has been developed systematically
for example in \cite{MR4383783, MR986363}, and there exist numerous works on this topic.
In the periodic setting, a detailed understanding of band structures is available through 
Floquet--Bloch decomposition, see \cite{HN, KS1, KS2, parrarichard, sabri} and references therein. 
On the other hand, spectral investigations on operators acting on higher-degree cochains --- edges and, above all, faces --- 
have appeared only recently in the literature.  
They mainly treat the Hodge Laplacian rather than any first-order Dirac operator.
For example, essential self-adjointness and spectral theory have been established 
for Hodge Laplacians on weighted graphs and on general simplicial complexes \cite{BaKe25, EnJa25x}.

In this work, we deal with the spectral theory of first-order operators acting on planar graphs.
Our approach is based on the discrete differential calculus introduced by Miranda and Parra in \cite{MP23}.
Two massive first-order operators can naturally be built from this differential calculus,
namely the Gauss-Bonnet operator and the Dirac--Hodge operator.
The Gauss--Bonnet operator $G_m$ couples vertices and edges through $\dd_0$ and its adjoint, 
with a mass term taking opposite signs on the two degrees.
In the massless case and for infinite graphs of bounded geometry,
the spectral and scattering theory for this operator was developed in \cite{AT15, Pa17}. 
On the other hand, the Dirac--Hodge operator $D_m$ acts on the full chain of vertices, edges, and faces.
Its expression involves the operators $\dd_0$, $\dd_1$, and their adjoints.
Up to a shift by $m^2$, its square is equal to the direct sum of the Hodge Laplacians on vertices, edges, and faces. 
These two operators are discrete counterparts of the same continuous Dirac operator, 
and differ only in whether the two-dimensional information carried by faces is retained.
In this sense, the passage from a Laplacian to a first-order operator is not unique.

Let us stress that this cochain construction is what makes the operator available on an arbitrary periodic graph. 
Discrete analogues of the Dirac operator that do not rely on the cochain formalism must instead 
approximate the partial derivatives of the continuous operator by difference operators, a choice that is not canonical.
As a consequence, the spectral theory of discrete Dirac operators has largely been confined to $\Z^n$ \cite{MR3242707, MR4589246}.
On the other hand, defining the operator via the coboundary maps, as discrete exterior derivatives, 
removes this ambiguity and makes sense on any graph.
Note that Dirac operators that interlace all degrees have been studied mainly through their continuum limits \cite{Na24, MP23, Exner_2025}.
The relation with their continuum counterparts has drawn recent attention through distinct limiting procedures: 
mesh refinement on $h\Z^n$ \cite{MP23,Na24}, and collapse of a graph-like space onto its skeleton \cite{Exner_2025}. 

Having defined first-order operators on planar graphs, 
let us turn to the second specific topic of this paper: flat bands. 
For periodic lattices, the Floquet--Bloch decomposition provides direct access to the band structure of the operators acting on them.
Flat bands correspond to Bloch bands on which the dispersion relation is constant, 
and are of special interest for condensed-matter physics. Among numerous physical references, 
let us mention \cite{Li89, Julku, experimental}, which are relevant for our investigations.  
On the mathematical side, flat bands of the graph Laplacian on periodic graphs have been studied 
for example in \cite{FL, sabri}. 
For the Archimedean tilings, Peyerimhoff and T\"aufer determine which of these lattices carry a flat band for $-\Delta_\V$ in \cite{PeTa21}, 
and Kerner, T\"aufer and Wintermayr analyse the robustness of these flat bands under variation of periodic weights in \cite{KeTaWi24}.

Since all references mentioned above are concerned with graph Laplacians, 
it is natural to wonder what changes when the same tilings are examined through a first-order operator, 
especially when edges and faces are also involved? 
Does the higher-order information carried by edges and faces produce flat bands absent at the level of vertices? 
For example, what about flat bands for the Dirac--Hodge operator $D_m$ on Archimedean tilings, 
how do they compare with the flat bands for $-\Delta_\V$?

Motivated by these questions, we provide three main results.
The first one corresponds to a spectral reduction: By exploiting the block structure of the Dirac--Hodge operator $D_m$
we show that its spectrum is determined entirely by the graph Laplacian on vertices and the Laplacian on faces.
More precisely, every band of $D_m$ is of the form $\pm\sqrt{\,\lambda+m^2\,}$ for $\lambda$ in the spectrum of 
$-\Delta_\V$ or of $-\Delta_\F$.
In particular, $D_m$ has a flat band precisely when $-\Delta_\V$ or $-\Delta_\F$ does. 
As a consequence and quite surprisingly, a question about a first-order operator on three cochain degrees
is reduced to two independent questions about scalar Laplacians.

Our second main result is a criterion for the choice of the most meaningful operator. 
The previous analysis applied to the Gauss--Bonnet operator $G_m$ reveals a flat band at $-m$ 
of multiplicity equal to the number of faces. 
Thus, the operator $G_m$ exhibits a flat band for all planar periodic graphs (having at least one face), 
regardless of the geometry of the graph. 
Since the Dirac--Hodge operator $D_m$ is free of this spurious flat band,  
we understand that its appearance is an artefact of the truncation of the cochain complex at edges. 
For the purpose of detecting genuine, geometry-dependent flat bands, $D_m$ is the appropriate first-order operator, 
while $G_m$ is not.

The third main result is the thorough investigation of flat bands for the Dirac--Hodge operator acting on Archimedean tilings.
This result is obtained by computing $-\Delta_\F$ in each case and by combining this information with the corresponding result 
on $-\Delta_\V$ determined in \cite{PeTa21}. 
Recall that for Archimedean lattices, only the Kagome and the super-Kagome lattices carry a flat band 
for the vertex Laplacian $-\Delta_\V$. 
It turns out that for $-\Delta_\F$ the same pattern also holds: only the Kagome and the super-Kagome lattices possess flat bands.
Thus, even if higher-order information carried by the faces is genuinely new spectral data, 
it does not broaden the class of Archimedean tilings that support flat bands. 
To show that this coincidence is not automatic --- that faces need not echo the vertices --- 
we examine the Lieb lattice, outside the Archimedean family. For this lattice, it is well-known that the vertex Laplacian 
has a flat band, but our investigations show that the Laplacian on faces does not. 
In this specific case, the operator $D_m$ inherits the vertex flat band but nothing more.

Let us finally provide the structure of this paper. 
Section \ref{sec:defdirac} sets up the framework: 1) planar graphs of bounded geometry, 
2) coboundary maps and the associated Hodge Laplacians on vertices, edges, and faces, 
3) Gauss--Bonnet and Dirac--Hodge operators $G_m$ and $D_m$. 
Section \ref{sec:spectral} develops Floquet--Bloch analysis of these operators 
on periodic graphs and establishes the spectral reduction expressing $\sigma(D_m)$ and $\sigma(G_m)$ 
in terms of the vertex and face Laplacians. 
The spurious flat band of $G_m$ and the resulting comparison between the two first-order operators
are also provided in this section. 
Section \ref{sec:flat} deals with the Dirac--Hodge operator 
on Archimedean lattices. We first treat the Kagome and the super-Kagome lattices, 
and then establish the absence of flat bands for the remaining Archimedean tilings. 
The Lieb lattice is then investigated as a contrast. 
In Section \ref{sec_loc}, we provide information about localised eigenvectors
for the Dirac--Hodge operator for the Kagome and super-Kagome lattices.
These results have been triggered by an interesting question from the referee.
In a short appendix, we also recall the duality between the face Laplacian of a planar graph and the vertex Laplacian 
of its dual graph.

\section{Planar graphs, Laplacians, and Dirac--Hodge operators}\label{sec:defdirac}

In this section we introduce the main objects for our investigations: planar graphs
with bounded geometry, and various operators acting on them.
These operators are all defined in terms of two maps $\dd_0$ and $\dd_1$
between Hilbert spaces related to the graphs.

\subsection{Planar graphs}\label{sub:planar_grpahs}\label{subsec_planar}

An unoriented graph $(\V,\E)$ consists of a set $\V$ of vertices and a set $\E$ 
of unoriented edges. For any edge $\e=\{\x,\y\}\in \E$, with $\x,\y\in \V$, we define the two 
oriented edges $\e=(\x,\y)$ and $\overline{\e}=(\y,\x)$, and denote by $\A$ the corresponding set of 
oriented edges. Thus, for any edge $\e\in \A$ the reverse edge $\overline{\e}$ also belongs to $\A$,
and conversely the two oriented edges $\e$ and $\overline{\e}$ are identified in $\E$. 
For an oriented edge $\e=(\x,\y)$ we write 
$o(\e)=\x$ and $t(\e)=\y$ for its origin and its target, respectively.  
Furthermore, the set of oriented edges coming out of $x\in \V$ is denoted by 
\begin{equation*}
\A_\x := \{ \e\in\A \mid o(\e)=\x \}
\end{equation*}
while the set of edges having one endpoint at $\x$ is denoted by $\E_\x$.

A cycle is an ordered collection of oriented edges
$(\e_1,\e_2,\dots,\e_n)$ such that $t(\e_j)=o(\e_{j+1})$ for $j=1,\dots,n-1$ and
$t(\e_n)=o(\e_1)$. The cycle is simple if $t(\e_i)\neq t(\e_j)$ whenever $i\neq j$ . 
A chordless cycle in a graph is a simple cycle such that no two vertices of the cycle are 
connected by an edge that does not itself belong to the cycle. 
A graph is said to be $2-$cell embedded into a surface if the interior of each chordless cycle 
is homeomorphic to an open disk. 
 
\begin{Definition}[Planar graph]\label{def:planar}
A \emph{planar graph} consists of an oriented graph $(\V,\A)$ endowed with a fixed $2-$cell
embedding into the plane. Its \emph{faces} are defined by chordless cycles, 
and the set of its faces is denoted by $\F$. 
\end{Definition}

In the sequel, we simply write $X=(\V,\A,\F)$ for any planar graph.
Observe that the notation contains the set of oriented edges in order to emphasise the role of orientation. 
For a face $\f=(\e_1,\dots,\e_n)$ consisting of a chordless cycle, we write $\overline{\f}\in \F$ for the chordless cycle
$(\overline{\e_n},\dots,\overline{\e_1})$ corresponding to the reverse direction.
Note that each oriented edge of the graph belongs to exactly two faces.

\begin{Remark}
The assumption of a $2-$cell embedding ensures that faces are well defined as 
$2-$dimensional objects whose boundaries correspond to polygonal simple closed curves in the plane. 
Although any embedding induces a rotation system on $X$ and thus a notion of faces, 
in general such faces need neither be finite nor closed, and therefore do not correspond to genuine $2$-cells. 
In particular, they would not support the homological framework required to develop 
the discrete differential calculus considered in this work. 
The cellularity hypothesis is thus both geometrically motivated by the class of 
locally finite periodic planar graphs studied below, and sufficient to develop a discrete Hodge theory.
\end{Remark}

As for oriented edges, we define the set of unoriented faces $\CC$ by identifying the faces $\f$ and  $\overline{\f}$. 
Thus, we call the triple $(\V,\E,\CC)$ the unoriented data for the planar graph $X$. 
In this setting as well, observe that any unoriented edge belongs to exactly two unoriented faces. 
We call such faces adjacent.
Note that $X=(\V,\A,\F)$ can be recovered from $(\V,\E,\CC)$. 
Indeed, the set $\A$ is obtained by setting for each edge in $\E$ two oriented edges, 
and subsequently these edges induce naturally the two orientations for $\F$. 

\subsection{Discrete differential operators and Laplacians}\label{sub:discrete_differential_operators}

Let $X=(\V,\A,\F)$ be a planar graph. We define the Hilbert space associated with vertices
$$
\ell^2(\V):=\Big\{f\colon \V\to \C \mid \sum_{\x\in \V} |f(\x)|^2 <\infty \Big\},
$$
endowed with the scalar product
\begin{equation*}
\langle f_1,f_2\rangle_{\ell^2(\V)}:=\sum_{\x\in \V} \overline{f_1(\x)}f_2(\x),\quad f_i\in \ell^2(\V).
\end{equation*}
Similarly, we define a second Hilbert space associated with oriented edges
\begin{equation*}
\ell^2(\A):=\Big\{g\colon \A\to \C \mid  g(\overline{\e})=-g(\e)\ \text{and}\
\sum_{\e\in \A}|g(\e)|^2 <\infty \Big\},
\end{equation*}
endowed with the scalar product
\begin{equation*}
\langle  g_1,g_2\rangle_{\ell^2(\A)}:= \frac{1}{2}\sum_{\e\in \A}\overline{g_1(\e)}g_2(\e),\quad g_i\in \ell^2(\A) .
\end{equation*}

A natural operator can be defined between these two Hilbert spaces. Namely,
let us set $\dd_0\colon \ell^2(\V)\to\ell^2(\A)$ defined on $f\in \ell^2(\V)$ and for $\e\in \A$ by
\begin{equation*}
[\dd_0f](\e)=f\big(t(\e)\big)-f\big(o(\e)\big).
\end{equation*}
For the boundedness of this operator, let $\deg_0\colon \V\to\N$ be given by $\deg_0(\x):=\sharp\A_\x$.
Assume then that 
$$
\deg_0(X):=\sup_{\x\in\V}\deg_0(\x)<\infty.
$$
Under this assumption, a straightforward computation shows that 
\begin{equation*}
\|\dd_0f\|^2_{\ell^2(\A)}\leq 2 \deg_0(X)\|f\|^2_{\ell^2(\V)},\quad\text{ for every }f\in\ell^2(\V).
\end{equation*}
Under the same assumption, the adjoint of $\dd_0$ is given on $g\in\ell^2(\A)$ and for $\x\in \V$ by
\begin{equation*}
[\dd_0^*g](\x)=-\sum_{\e\in\A_\x}g(\e).
\end{equation*}
Then the usual graph-Laplacian is obtained from the equalities
\begin{equation}\label{eq:Deltavertices}
[\dd_0^*\dd_0f](\x)=-\sum_{\e\in\A_\x}\Big(f\big(t(\e)\big)-f\big(o(\e)\big)\Big)=\deg_0(\x)f(\x)-\sum_{\e\in\A_\x}f\big(t(\e)\big)
\end{equation}
by setting 
$$
\Delta_{\V}:=-\dd_0^*\dd_0.
$$

Let us now turn our attention to the $2$-dimensional objects and define $\ell^2(\F)$ by
\begin{equation*}
\ell^2(\F):=\Big\{h\colon\F\to \C \mid  h(\overline{\f})=-h(\f)\ \text{and}\
\sum_{\f\in \F}|h(\f)|^2 <\infty \Big\},
\end{equation*}
endowed with the scalar product
\begin{equation*}
\langle  h_1,h_2\rangle_{\ell^2(\F)}:= \frac{1}{2}\sum_{\f\in \F}\overline{h_1(\f)}h_2(\f),\quad h_i\in \ell^2(\F).
\end{equation*}
We can then consider the coboundary operator $\dd_1\colon \ell^2(\A)\to\ell^2(\F)$ 
defined on $g\in \ell^2(\A)$ and for $\f\in \F$ by
\begin{equation*}
[\dd_1g](\f):= \sum_{\e\in \f}g(\e).
\end{equation*} 
As before, in order to ensure the boundedness of this operator we need a notion of boundedness of the geometry.
Thus, let $\deg_2\colon\F\to\N$ be given for $\f=(\e_1,\dots,\e_n)\in \F$ by
\begin{equation*}
\deg_2(\f)=n,
\end{equation*}
and assume that 
$$
\deg_2(X):=\sup_{\f\in\F}\deg_2(\f)<\infty.
$$
Then the boundedness of $\dd_1$ follows from  
\begin{align*}
\|\dd_1g\|^2_{\ell^2(\F)}= & \frac12\sum_{\f\in\F}|[\dd_1g](\f)|^2\\
\leq & \frac{1}{2}\sum_{\f\in\F}\deg_2(\f)\sum_{\e\in\f}|g(\e)|^2\\
\leq & \deg_2(X)\frac{1}{2}\sum_{\f\in\F}   \sum_{\e\in\f}|g(\e)|^2\\
= & \deg_2(X)\cdot 2\|g\|^2_{\ell^2(\A)}
\end{align*}
where the planarity of $X$ has been used in the last equality, \
implying that each oriented edge lies in only two oriented faces.
The adjoint $\dd_1^*\colon \ell^2(\F)\to \ell^2(\A)$ is then 
given on $h\in \ell^2(\F)$ and for $\e\in \A$ by 
\begin{equation*}
[\dd_1^*h](\e)=\sum_{\f\in\F\mid \e\in \f}h(\f).
\end{equation*}
To define the Laplacian on faces, we compute,
\begin{equation}\label{eq:Deltafaces}
[\dd_1\dd_1^*h](\f)= \sum_{\e \in \f}\sum_{\f'\in\F|\e\in \f'} h(\f')
=\deg_2(\f)h(\f)-\sum_{\f'\sim \f }h(\f'),
\end{equation}
where $\f'\sim \f$ if there exists $\e \in \f$ such that $\overline{\e}\in \f'$. 
Note that for the second equality, we have used that $\deg_2(\f)=\#\{\f'\sim \f\}$,
another consequence of the planarity of $X$. 
Finally, we can set 
$$
\Delta_{\F}:=-\dd_1\dd_1^*
$$
for the Laplacian on faces. 

\begin{Remark}
Graphs satisfying the two conditions $\deg_0(X)<\infty$ and $\deg_2(X)<\infty$ will be referred
as graphs with bounded geometry. For them, the operators introduced so far are bounded.
We refer to \cite{Ke15} for the notion of unbounded geometry for graphs.
\end{Remark}

Let us finally turn our attention to the Laplacian on edges. 
Since edges can be related to each other via either vertices or faces, the Laplacian on edges encompasses both interactions by
\begin{equation}\label{eq:Laplacianedges}
\Delta_{\A}:=-\dd_0\dd_0^*-\dd_1^*\dd_1\equiv \Delta^{\text{down}}_{\A}+\Delta^{\text{up}}_{\A}.
\end{equation} 
In Section \ref{sub:spectral}, this operator will be further analysed based 
on the \emph{Hodge decomposition}
\begin{equation}\label{eq:Hodgedecomposition}
\ell^2(\A)=\Ran(\dd_0)\oplus \Ker(\Delta_\A) \oplus \Ran(\dd_1^*).
\end{equation}
This equality has been proved in a more general framework in \cite[Sec.~2.1]{MP23}
and can be checked by explicit computations.
Thus, we have defined so far three Laplace-type operators, acting respectively on vertices, edges, and faces.
By considering graphs with bounded geometry, these operators are all bounded.

\subsection{Gauss--Bonnet and Dirac--Hodge operators}\label{sub:DH_operators}

Let us come to Dirac-type operators, based on the operators $\dd_0$ and $\dd_1$ 
introduced in the previous section. As developed in \cite{AT15,Pa17}, 
if one considers graphs only as $1-$dimensional objects, then it is natural 
to define the so-called \emph{Gauss--Bonnet} operator on $\H_{GB}(X)=\ell^2(\V)\oplus \ell^2(\A)$ by 
\begin{equation*}
G:=\begin{pmatrix}
0 & \dd_0^*\\
\dd_0 & 0
\end{pmatrix}.
\end{equation*}
Since it satisfies 
\begin{equation*}
G^2=\begin{pmatrix}
-\Delta_\V & 0\\
0 & -\Delta^{\text{down}}_{\A}
\end{pmatrix}
\end{equation*}
the operator $G$ is a (discrete) Dirac-type operator. 
On the other hand, and as emphasized \cite{MP23,Bi24}, if one includes $2-$dimensional information in graphs,
one is led to the so-called \emph{Dirac--Hodge} operator acting on 
$\H_{DH}(X)=\ell^2(\V)\oplus \ell^2(\A)\oplus \ell^2(\F)$ by
\begin{equation*}{}
D:=\begin{pmatrix}
0 & \dd_0^* & 0\\
\dd_0 & 0 & \dd_1^* \\
0 & \dd_1 & 0
\end{pmatrix}.
\end{equation*}
This operator satisfies 
\begin{equation*}
D^2=\begin{pmatrix}
-\Delta_\V & 0 & 0\\
0 & -\Delta_\A & 0 \\
0 & 0 & -\Delta_\F
\end{pmatrix}.
\end{equation*}
Note that the distinction between the names Gauss--Bonnet and the Dirac--Hodge operators 
is purely contingent on the developments of the literature of discrete operators, 
since both operators are intended as discrete counterparts of the same family of continuous objects. 
We adopt the distinction here in order to make future comparisons easier to follow.

By analogy to continuous Dirac operators, we also introduce the two involutions
$\tau\colon \H_{GB}(X)\to \H_{GB}(X)$ and 
$\tau\colon \H_{DH}(X)\to\H_{DH}(X)$ defined respectively by
$\tau(f,g)=(f,-g)$ and $\tau(f,g,h)=(f,-g,h)$.
Then, for any fixed mass $m\geq 0$, we can set
\begin{equation}\label{eq:Gmdef}
G_m:=G+m\tau =\begin{pmatrix}
m & \dd_0^*\\
\dd_0 & -m
\end{pmatrix}
\end{equation}
and 
\begin{equation}\label{eq:Dmdef}
D_m:=D+m\tau=\begin{pmatrix}
m & \dd_0^* & 0\\
\dd_0 & -m & \dd_1^* \\
0 & \dd_1 & m
\end{pmatrix}.
\end{equation}

\section{Spectral theory for periodic planar graphs}\label{sec:spectral}

In this section, we consider a periodic planar graph, namely a planar graph $X$ 
with bounded geometry and which  
admits a cellular embedding compatible with an action of $\Z^2$ by graph automorphisms. 
Note that the graph automorphism also acts on faces by its action on edges. 
We denote by $\XX :=\big(\V(\XX),\A(\XX),\F(\XX)\big)$ the quotient graph $ X/\Z^2$, 
which inherits a finite graph structure from $X$, 
and denote by $\omega\colon X\to\XX$ the quotient map. 
The periodic planarity of $X$ implies that $\XX$ admits a $2-$cell embedding in the $2$-torus $\T^2$.
We denote by $\nv$ the number of vertices in $\V(\XX)$, by $\ne$ the number of unoriented edges in $\E(\XX)$, 
and by $\nf$ the number of unoriented faces in $\CC(\XX)$. 
Since the Euler characteristic of $\T^2$ is zero, 
we obtain that every periodic planar graph satisfies (see for instance \cite[Chp. 3]{MoTh01})
\begin{equation}\label{eq:euler0}
\nv - \ne + \nf = 0.
\end{equation}
Let us stress that in the quotient $\XX$, the boundaries of faces may traverse vertices and edges multiple times, 
as shown for example in Figure \ref{fig:kagome_and_quotient}.
Nevertheless, the resulting cellular decomposition of $\T^2$ yields the correct equality.

\begin{figure}[t]
\centering

\begin{minipage}[t]{0.58\textwidth}
  \centering
 \begin{tikzpicture}[scale = 2]
\begin{scope}[yshift = -3.5cm]
\clip (0,0) rectangle (3,2);
\coordinate (P) at (0.45,0.26);

\coordinate (aone) at (0.6,0);
\coordinate (atwo) at (0.3,{0.3*sqrt(3)});

\coordinate (P1)  at ($(P)+(aone)$);
\coordinate (P2)  at ($(P)+(atwo)$);
\coordinate (P12) at ($(P)+(aone)+(atwo)$);

\fill[blue!10, opacity=0.45] (P) -- (P1) -- (P12) -- (P2) -- cycle;

\draw[->, thick, blue!80] (P) -- (P1) node[midway, below] {$a_1$};
\draw[->, thick, blue!80] (P) -- (P2) node[midway, left] {$a_2$};

\begin{scope}[scale = .3]
  \foreach \i in {0,...,6}
    {
    \foreach \j in {0,...,3}
        {
        \pgfmathsetmacro{\ii}{2*\i}
        \pgfmathsetmacro{\jj}{4*\j*0.86602540378}
        \begin{scope}[xshift = \ii cm, yshift = \jj cm]
                \draw (0:1) -- (60:1) -- (120:1) -- (180:1) --
                      (240:1) -- (300:1) -- (360:1);
                
            \begin{scope}[xshift = 1cm, yshift = 2*0.86602540378cm]
                \draw (0:1) -- (60:1) -- (120:1) -- (180:1) --
                      (240:1) -- (300:1) -- (360:1);
            \end{scope}
        \end{scope}
        }
    }
\end{scope}
\end{scope}

 \end{tikzpicture}

  \vspace{2pt}
  {\footnotesize (a) Kagome lattice and a fundamental domain.}
\end{minipage}\hfill
\begin{minipage}[t]{0.38\textwidth}
  \centering
\begin{tikzpicture}[scale=3,
  sqv/.style={draw, regular polygon, regular polygon sides=4, inner sep=1.2pt},
  midLR/.style={draw, circle, inner sep=1.0pt},
  midTB/.style={draw, regular polygon, regular polygon sides=3, inner sep=1.0pt, rotate=90},
  edge/.style={line width=0.55pt}
]

\coordinate (A) at (0,0);
\coordinate (B) at (1,0);
\coordinate (C) at (1,1);
\coordinate (D) at (0,1);

\coordinate (Mbot)   at (0.5,0);
\coordinate (Mtop)   at (0.5,1);
\coordinate (Mleft)  at (0,0.5);
\coordinate (Mright) at (1,0.5);

\path[pattern=north east lines, pattern color=black!25, 
]
  (A) -- (Mbot) -- (Mleft) -- cycle;

\path[pattern=horizontal lines, pattern color=black!54, 
]
  (Mbot) -- (B) -- (Mright) -- (Mtop) -- (D) -- (Mleft) -- cycle;

\path[pattern=dots, pattern color=black!18, 
]
  (C) -- (Mtop) -- (Mright) -- cycle;

\draw[edge] (A)--(B)--(C)--(D)--cycle;
\draw[edge] (Mleft) -- (Mbot);
\draw[edge] (Mtop) -- (Mright);

\node[sqv] at (A) {};
\node[sqv] at (B) {};
\node[sqv] at (C) {};
\node[sqv] at (D) {};

\node[midTB] at (Mbot)   {};
\node[midTB] at (Mtop)   {};
\node[midLR] at (Mleft)  {};
\node[midLR] at (Mright) {};

\end{tikzpicture}

  \vspace{2pt}
  {\footnotesize (b) Quotient graph on the flat torus.}
\end{minipage}

\caption{(a) Kagome lattice with a fundamental domain and the choice of generators for the $\Z^2$-action, denoted by $a_1$ and $a_2$. Figure adapted from \cite{PeTa21}.
(b) The induced quotient graph embedded in the flat torus model of $\T^2$. In this case, we have $\nv=3$, $\ne=6$, and $\nf=3$.}
\label{fig:kagome_and_quotient}
\end{figure}

\subsection{Floquet--Bloch theory} \label{sub:bloch}

Floquet--Bloch theory for periodic graphs is a well developed subject, we present here just what is necessary for our aim. 
The crucial observation is that both $\dd_0$ and $\dd_1$ commute with the action of $\Z^2$. 
Indeed, for $\mu\in \Z^2$, if we denote by $\tau_\mu$ the translation on forms and simply by $\mu$ the action
of translations on $\x$, $\e$, and $\f$, then we have  
\begin{equation*}
[\dd_0\tau_\mu f](\e)=\big[\tau_\mu f\big]\big(t(\e)\big)-\big[\tau_\mu f\big] \big(o(\e)\big)
=f\big(\mu t(\e)\big)-f\big(\mu o(\e)\big)=[\dd_0f](\mu \e)=\big[\tau_\mu(\dd_0 f)\big](\e),
\end{equation*}
and analogously
 \begin{equation*}
[\dd_1\tau_\mu g](\f)=\sum_{\e\in\f}[\tau_\mu g](\e)=\sum_{\e\in\f}g(\mu\e)=\sum_{\e\in\mu\f}g(\e)=[\dd_1g](\mu \f)
= \big[\tau_\mu(\dd_1 g)\big](\f).
\end{equation*}

For the operator $G_m$ introduced in \eqref{eq:Gmdef}, it follows from \cite[Prop. 3.5]{Pa17} that 
there exists a unitary transformation $\U\colon\H_{GB}(X)\to L^2(\T^2,\C^{\nv+\ne})$ such that 
$\U G_m \U^*$ can be understood as a multiplication operator by a matrix 
$G_m(\xi)\in M_{\ne\times \nv}(\C)$. 
Our aim is to show how to extend $\U$ to $\H_{DH}(X)$ and to describe 
the operator $\U D_m \U^*$ with $D_m$ introduced in \eqref{eq:Dmdef}.
To construct $\U$, one needs to fix a set of representatives of $\XX$ in $X$. 
Thus, let us set
\begin{align*}
\V(\XX)&=(\xx_1,\xx_2,\dots,\xx_\nv),\\
\A(\XX)&=(\ee_1,\ee_2,\dots,\ee_\ne,\overline{\ee_1},\dots,\overline{\ee_\ne}),\\
\F(\XX)&=(\ff_1,\ff_2,\dots,\ff_\nf,\overline{\ff_1},\dots,\overline{\ff_\nf}).
\end{align*}
We also fix $\V_0\subset \V$, $\A_0\subset \A$, and $\F_0\subset \F$, such that they correspond 
one-to-one with $\V(\XX)$, $\A(\XX)$, and $\F(\XX)$ respectively. 
More precisely, we firstly choose a set of $\nv$ vertices $\hx_i\in \V$ for $\V_0$ such that $\omega(\hx_i)=\xx_i$
for all $i\in \{1,\dots,\nv\}$. 
Then, we choose a set of $\ne$ oriented edges $\he_i\in \A$ such that $\omega(\he_i)=\ee_i$ for all 
$i\in \{1,\dots,\ne\}$.
For simplicity and without loss of generality, we impose that $o(\he_i)\in\V_0$. 
We define the set $\A_0:=\{\he_i, \overline{\he_i}\}_{i=1}^\ne$ which is in bijection with $\A(\XX)$.
Finally, we choose a set of $\nf$ oriented chordless cycles $\hf_i$ in $\F$ such that 
$\omega(\hf_i)=\ff_i$ for all $i\in \{1,\dots,\nf\}$.
The set $\F_0$ is defined by $\{\hf_i, \overline{\hf_i}\}_{i=1}^\nf$ which is in bijection with $\F(\XX)$.
Thus, with these notations and conventions we have defined a bijective map 
$$
\widehat{\cdot}\ \colon \big(\V(\XX),\A(\XX),\F(\XX) \big) \to \big(\V_0,\A_0,\F_0 \big)
$$
that we call the lift map.
Based on this map, one can define a suitable coordinate system on $X$:
for every vertex $\x$, edge $\e$, or face $\f$ in $X$ we define 
$\lfloor \x \rfloor$, $\lfloor \e \rfloor$, $\lfloor \f \rfloor$ in $\Z^2$, by the relations
\begin{equation*}
\x=\lfloor \x \rfloor\widehat{\omega(\x)}\,, \qquad \e=\lfloor \e \rfloor\widehat{\omega(\e)}\,, 
\qquad \f=\lfloor \f \rfloor\widehat{\omega(\f)}\,.
\end{equation*}

We can now define $\U\colon \H_{DH}(X)\to L^2(\T^2,\C^{\nv+\ne+\nf})$ 
given for $(f,g,h) \in \H_{DH}$ and $i\in \{1,\dots, \nv+\ne+\nf\}$ by
\begin{equation*}
[\U(f,g,h)]_i(\xi):=\begin{cases}
\sum_{\mu\in\Z^2}e^{-2\pi i \xi \cdot \mu} f(\mu \hx_i) &\text{ if  }1\leq i\leq \nv,\\
\sum_{\mu\in\Z^2}e^{-2\pi i \xi \cdot \mu} g(\mu \he_{i-\nv}) &\text{ if  }\nv+1\leq i\leq \nv+\ne,\\
\sum_{\mu\in\Z^2}e^{-2\pi i \xi \cdot \mu} h(\mu \hf_{i-\nv-\ne}) &\text{ if  }\nv+\ne+1\leq i.
\end{cases}
\end{equation*}
Another convenient way to look at this map is to decompose it into three parts:
\begin{align*}
&\U_\V\colon \ell^2(\V) \to L^2(\T^2,\C^{\nv}), \quad [\U_\V f]_i(\xi) 
= \sum_{\mu\in\Z^2}e^{-2\pi i \xi \cdot \mu} f(\mu \hx_i) & 1\leq i\leq \nv,\\
&\U_\A\colon \ell^2(\A) \to L^2(\T^2,\C^{\ne}), \quad  [\U_\A g]_i(\xi) 
= \sum_{\mu\in\Z^2}e^{-2\pi i \xi \cdot \mu} g(\mu \he_{i}) & 1\leq i\leq \ne, \\
&\U_\F\colon \ell^2(\F) \to L^2(\T^2,\C^{\nf}), \quad [\U_\F h]_i(\xi) 
= \sum_{\mu\in\Z^2}e^{-2\pi i \xi \cdot \mu} h(\mu \hf_{i}) & 1\leq i\leq \nf.\\
\end{align*}
Note that the indexations in the final spaces are slightly simpler since the vector space $\C^{\nv+\ne+\nf}$
has been decomposed into three distinct parts.
By considering these three maps, we can now study the natural block decomposition 
of the unitary conjugation $\U  D_m \U^*$, with $D_m$ given in \eqref{eq:Dmdef}, namely
\begin{equation*}
\U\begin{pmatrix}
m & \dd_0^* & 0\\
\dd_0 & -m & \dd_1^* \\
0 & \dd_1 & m
\end{pmatrix} \U^* = 
\begin{pmatrix}
m & \U_\V\dd_0^* \U_\A^*& 0\\
\U_\A \dd_0 \U_\V^*& -m &\U_\A \dd_1^* \U_\F^* \\
0 & \U_\F\dd_1 \U_\A^*& m
\end{pmatrix}.
\end{equation*}
The following two statements provide the expressions appearing in this matrix.
Before its statement, one has to observe that any $\hf_i\in \F_0$ can be expressed
in terms of elements of $\A_0$. More precisely, given $\hf_i=(\e_1,\dots,\e_n)\in \F_0$,
each $\e_k$ can be written uniquely as $\lfloor \e_k\rfloor \he_j$ or as $\lfloor \e_k\rfloor \overline{\he_j}$
with $j$ defined by $\omega(\e_k)\in \{\ee_j,\overline{\ee_j}\}$.

\begin{Proposition}\label{prop_d1xi}
Let $X=(\V,\A,\F)$ be a periodic planar graph with bounded geometry. 
Then $\U_\F \dd_1\U_\A^*$ is the multiplication operator from $L^2(\T^2,\C^\ne)$ to $L^2(\T^2,\C^\nf)$ 
by the real-analytic matrix-valued function $\dd_1(\cdot):\T^2\to M_{\nf\times \ne}(\C)$ defined for $\xi \in \T^2$ by
$$
\dd_1(\xi)_{ij} = 
\sum_{k\mid \omega(\e_k)=\ee_j} e^{2\pi i\xi \cdot \lfloor \e_k\rfloor}
- \sum_{k\mid \omega(\e_k)=\overline{\ee_j}} e^{2\pi i\xi \cdot \lfloor \e_k\rfloor}.
$$
\end{Proposition}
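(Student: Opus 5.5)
Since $\U_\A$ and $\U_\F$ are unitary, it is enough to prove the intertwining relation $\U_\F \dd_1 = \dd_1(\cdot)\,\U_\A$ on a dense subspace. We take finitely supported $g\in\ell^2(\A)$; then all Fourier sums are finite and no convergence issue arises. We fix $i\in\{1,\dots,\nf\}$ and write $\hf_i=(\e_1,\dots,\e_n)$.

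\emph{Step 1: the left-hand side.} By definition,
\[
[\U_\F\dd_1 g]_i(\xi)=\sum_{\mu\in\Z^2}e^{-2\pi i\xi\cdot\mu}[\dd_1 g](\mu\hf_i)
=\sum_{\mu}e^{-2\pi i\xi\cdot\mu}\sum_{k=1}^n g(\mu\e_k),
\]
where we use that the edges of $\mu\hf_i$ are exactly $\mu\e_1,\dots,\mu\e_n$, as in the $\Z^2$-equivariance computation above. For each $k$ we write $\e_k=\lfloor\e_k\rfloor\he_j$ or $\e_k=\lfloor\e_k\rfloor\overline{\he_j}$, with $j$ determined by $\omega(\e_k)$. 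In the second case antisymmetry gives $g(\mu\e_k)=-g(\mu\lfloor\e_k\rfloor\he_j)$, since $\overline{\nu\he_j}=\nu\overline{\he_j}$ because translations are graph automorphisms.

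\emph{Step 2: reindexing.} We exchange the two finite sums and substitute $\nu=\mu+\lfloor\e_k\rfloor$, so that $\mu\lfloor\e_k\rfloor\he_j=\nu\he_j$ in additive notation for the group action. This gives
\[
\sum_{\mu}e^{-2\pi i\xi\cdot\mu}g(\mu\lfloor\e_k\rfloor\he_j)=e^{2\pi i\xi\cdot\lfloor\e_k\rfloor}\sum_{\nu}e^{-2\pi i\xi\cdot\nu}g(\nu\he_j)=e^{2\pi i\xi\cdot\lfloor\e_k\rfloor}[\U_\A g]_j(\xi).
\]
Grouping the terms by $j$, with sign $+1$ when $\omega(\e_k)=\ee_j$ and $-1$ when $\omega(\e_k)=\overline{\ee_j}$, yields exactly $\sum_j \dd_1(\xi)_{ij}[\U_\A g]_j(\xi)$. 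This step only requires bookkeeping of the lift map and of orientation reversal.

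\emph{Step 3: extension and regularity.} Each entry $\dd_1(\xi)_{ij}$ is a finite trigonometric polynomial, hence real-analytic and bounded on $\T^2$. The corresponding multiplication operator is therefore bounded, so the identity extends from the dense subspace to all of $L^2(\T^2,\C^\ne)$, which shows $\U_\F\dd_1\U_\A^*=\dd_1(\cdot)$.

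The main obstacle is the sign and translation bookkeeping in Step 2. One has to check that a face of the quotient which crosses the same quotient edge several times, as in Figure~\ref{fig:kagome_and_quotient}, is correctly accounted for by summing over all $k$ with the same $j$. One also has to verify that $\lfloor\e_k\rfloor$ for a reversed edge refers to the lift $\overline{\he_j}$, not to $\he_j$. This second point is harmless because $\overline{\nu\he_j}=\nu\overline{\he_j}$, so both lifts carry the same coordinate $\nu$.
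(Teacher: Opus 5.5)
Your proof is correct and follows the same route as the paper. You expand $\dd_1$ over the edges $\mu\e_k$ of $\mu\hf_i$, rewrite each edge as a translate of $\he_j$ or $\overline{\he_j}$ with the sign coming from antisymmetry, and shift the summation index by $\lfloor \e_k\rfloor$ to extract the phase $e^{2\pi i\xi\cdot\lfloor\e_k\rfloor}$. The only difference is that you run the computation on finitely supported $g$ and extend by boundedness, whereas the paper works directly with $\U_\A^*\alpha$ for general $\alpha$ and interchanges the sums formally, so your version is slightly more careful on that point.
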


\begin{proof}
Let $\hf_i=(\e_1,\dots,\e_n)\in \F_0$. Then for $\alpha\in L^2(\T^2,\C^\ne)$ one has
\begin{align*}
&\big[\U_\F \dd_1\U_\A^*\alpha\big]_i(\xi) \\ 
& = \sum_{\mu\in \Z^2}e^{-2\pi i\xi \cdot \mu}\big[\dd_1\U_\A^*\alpha\big](\mu \hf_i) \\
& = \sum_{\mu\in \Z^2}e^{-2\pi i\xi \cdot \mu} \sum_{\e\in \mu \hf_i}\big[\U_\A^*\alpha\big](\e) \\
& = \sum_{\mu\in \Z^2}e^{-2\pi i\xi \cdot \mu} \sum_{k=1}^n\big[\U_\A^*\alpha\big](\mu \e_k) \\
& = \sum_{\mu\in \Z^2}e^{-2\pi i\xi \cdot \mu} \bigg( \sum_{k\mid \omega(\e_k)\in \{\ee_j\}_j}\big[\U_\A^*\alpha\big]\big((\mu+\lfloor \e_k\rfloor) \widehat{\omega(\e_k)}\big)
- \sum_{k\mid \omega(\overline{\e_k})\in \{\ee_j\}_j}\big[\U_\A^*\alpha\big]\big((\mu+\lfloor \e_k\rfloor) \widehat{\omega(\overline{\e_k})}\big) \bigg) \\
& = \sum_{k\mid \omega(\e_k)\in \{\ee_j\}_j}e^{2\pi i\xi \cdot \lfloor \e_k\rfloor} 
\sum_{\mu\in \Z^2}e^{-2\pi i\xi \cdot (\mu+\lfloor \e_k\rfloor)} \big[\U_\A^*\alpha\big]\big((\mu+\lfloor \e_k\rfloor) \widehat{\omega(\e_k)}\big) \\
& \quad - \sum_{k\mid \omega(\overline{\e_k})\in \{\ee_j\}_j}e^{2\pi i\xi \cdot \lfloor \e_k\rfloor} 
\sum_{\mu\in \Z^2}e^{-2\pi i\xi \cdot (\mu+\lfloor \e_k\rfloor)} \big[\U_\A^*\alpha\big]\big((\mu+\lfloor \e_k\rfloor) \widehat{\omega(\overline{\e_k})}\big) \\
& =\sum_{j=1}^\ne  \sum_{k\mid \omega(\e_k)=\ee_j} e^{2\pi i\xi \cdot \lfloor \e_k\rfloor} 
\sum_{\mu\in \Z^2}e^{-2\pi i\xi \cdot (\mu+\lfloor \e_k\rfloor)} \big[\U_\A^*\alpha\big]\big((\mu+\lfloor \e_k\rfloor) \he_j\big) \\
& \quad - \sum_{j=1}^\ne  \sum_{k\mid \omega(\e_k)=\overline{\ee_j}} e^{2\pi i\xi \cdot \lfloor \e_k\rfloor} 
\sum_{\mu\in \Z^2}e^{-2\pi i\xi \cdot (\mu+\lfloor \e_k\rfloor)} \big[\U_\A^*\alpha\big]\big((\mu+\lfloor \e_k\rfloor) \he_j\big) \\
& = \sum_{j=1}^\ne  \sum_{k\mid \omega(\e_k)=\ee_j} e^{2\pi i\xi \cdot \lfloor \e_k\rfloor} [\alpha]_j(\xi)
- \sum_{j=1}^\ne  \sum_{k\mid \omega(\e_k)=\overline{\ee_j}} e^{2\pi i\xi \cdot \lfloor \e_k\rfloor} [\alpha]_j(\xi).
\end{align*}
One infers from the last expression that
$$
\dd_1(\xi)_{ij} = 
\sum_{k\mid \omega(\e_k)=\ee_j} e^{2\pi i\xi \cdot \lfloor \e_k\rfloor}
- \sum_{k\mid \omega(\e_k)=\overline{\ee_j}} e^{2\pi i\xi \cdot \lfloor \e_k\rfloor},
$$
as stated.
\end{proof}

For completeness, we adapt here the statement of \cite[Prop. 3.5]{Pa17} to our notations. 
For this, we set for any $e\in \A$
$$
\eta(\e):=\lfloor t(\e)\rfloor-\lfloor o(\e)\rfloor \in \Z^2.
$$
Note that the function $\eta\colon \A\to \Z^2$ is periodic, which implies that $\eta(\e)=\eta(\e')$ if 
$\omega(\e)=\omega(\e')$. Accordingly, we set $\eta(\ee):=\eta(\e)$ if $\omega(\e)=\ee\in \A(\XX)$.

\begin{Proposition}\label{prop_d2xi}
Let $X=(\V,\A,\F)$ be a periodic planar graph with bounded geometry. 
Then $\U_\A \dd_0\U_\V^*$ is the multiplication operator from $L^2(\T^2,\C^\nv)$ to $L^2(\T^2,\C^\ne)$ 
by the real-analytic matrix-valued function $\dd_0(\cdot)\colon \T^2\to M_{\ne\times \nv}(\C)$
defined for $\xi \in \T^2$ by
\begin{equation}\label{eq:d0xi}
    \dd_0(\xi)_{ij}=\begin{cases}
        0 &\text{ if } o(\ee_i)\neq \xx_j \neq t(\ee_i),\\
        -1 &\text{ if } o(\ee_i)= \xx_j \neq t(\ee_i),\\
        e^{2\pi i \xi\cdot\eta (\ee_i) } &\text{ if } o(\ee_i)\neq \xx_j = t(\ee_i),\\
        -1 + e^{2\pi i \xi\cdot\eta (\ee_i) } &\text{ if } o(\ee_i)= \xx_j = t(\ee_i).
    \end{cases}
\end{equation}  
\end{Proposition}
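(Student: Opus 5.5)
The plan is to mimic the proof of Proposition \ref{prop_d1xi}, working with $\dd_0$ instead of $\dd_1$. Take $\alpha\in L^2(\T^2,\C^\nv)$ and $i\in\{1,\dots,\ne\}$. Starting from the definition of $\U_\A$ and of $\dd_0$, we write
\begin{equation*}
\big[\U_\A\dd_0\U_\V^*\alpha\big]_i(\xi)=\sum_{\mu\in\Z^2}e^{-2\pi i\xi\cdot\mu}\Big(\big[\U_\V^*\alpha\big]\big(t(\mu\he_i)\big)-\big[\U_\V^*\alpha\big]\big(o(\mu\he_i)\big)\Big).
\end{equation*}
The computation then rests on locating the two endpoints of $\mu\he_i$ in the coordinate system $\lfloor\cdot\rfloor$.

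For the origin, the convention $o(\he_i)\in\V_0$ gives $o(\mu\he_i)=\mu\,\widehat{o(\ee_i)}$. For the target, write $t(\he_i)=\lfloor t(\he_i)\rfloor\widehat{t(\ee_i)}$. Since $\lfloor o(\he_i)\rfloor=0$, we have $\lfloor t(\he_i)\rfloor=\eta(\ee_i)$, and hence $t(\mu\he_i)=(\mu+\eta(\ee_i))\widehat{t(\ee_i)}$. Here we use that the $\Z^2$-action commutes with $o$ and $t$, and that $\eta$ is periodic.

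Next we recall that $\U_\V$ is unitary with inverse $[\U_\V^*\alpha](\nu\hx_j)=\int_{\T^2}e^{2\pi i\xi\cdot\nu}\alpha_j(\xi)\,d\xi$; this is the standard Fourier series inversion. Substituting this and shifting the summation index $\mu\mapsto\mu-\eta(\ee_i)$ in the target term, exactly as was done with $\lfloor\e_k\rfloor$ in Proposition \ref{prop_d1xi}, yields
\begin{equation*}
\big[\U_\A\dd_0\U_\V^*\alpha\big]_i(\xi)=e^{2\pi i\xi\cdot\eta(\ee_i)}\alpha_{j_t}(\xi)-\alpha_{j_o}(\xi),
\end{equation*}
where $\xx_{j_t}=t(\ee_i)$ and $\xx_{j_o}=o(\ee_i)$. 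Reading off the coefficient of $\alpha_j$ gives the four cases of \eqref{eq:d0xi}. When $j$ is neither endpoint the coefficient is $0$. When $j$ is only the origin it is $-1$, and when $j$ is only the target it is $e^{2\pi i\xi\cdot\eta(\ee_i)}$. When the edge is a loop in $\XX$, so that both endpoints equal $\xx_j$, the two contributions add to $-1+e^{2\pi i\xi\cdot\eta(\ee_i)}$. Real-analyticity holds because each entry is a trigonometric polynomial.

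The only genuinely delicate point is the bookkeeping of the target's coordinate. The identity $\lfloor t(\he_i)\rfloor=\eta(\ee_i)$ depends on the normalisation $o(\he_i)\in\V_0$, and we must check that $\eta$ does not depend on the lift chosen. Beyond that, we need to justify the interchange of sums, which is fine for finitely supported $f$ and then extends by density and boundedness. Alternatively, the whole statement could be quoted from \cite[Prop.~3.5]{Pa17}, with the computation above serving only to translate it into the present notation.
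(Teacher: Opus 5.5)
Your proof is correct. It differs from the paper mainly in that it supplies an argument: the paper gives no proof of this proposition and simply transcribes \cite[Prop.~3.5]{Pa17} into its own notation, which is the fallback you mention at the end.

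Your direct route parallels the proof of Proposition~\ref{prop_d1xi}:
\begin{itemize}
\item you locate the endpoints as $o(\mu\he_i)=\mu\,\widehat{o(\ee_i)}$ and $t(\mu\he_i)=(\mu+\eta(\ee_i))\,\widehat{t(\ee_i)}$;
\item you use $\U_\V\U_\V^*=1$ together with the shift $\mu\mapsto\mu-\eta(\ee_i)$;
\item this gives $[\U_\A\dd_0\U_\V^*\alpha]_i(\xi)=e^{2\pi i\xi\cdot\eta(\ee_i)}\alpha_{j_t}(\xi)-\alpha_{j_o}(\xi)$, from which all four cases follow, including loops of the quotient $\XX$.
\end{itemize}

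What your version buys is a self-contained statement and a clear view of where the normalisation $o(\he_i)\in\V_0$ is used. Without it, the two nonzero entries of row $i$ would be $-e^{2\pi i\xi\cdot\lfloor o(\he_i)\rfloor}$ and $e^{2\pi i\xi\cdot\lfloor t(\he_i)\rfloor}$ rather than $-1$ and $e^{2\pi i\xi\cdot\eta(\ee_i)}$. You also make explicit that $\eta$ is well defined on $\A(\XX)$, which rests on $\lfloor\mu\x\rfloor=\mu+\lfloor\x\rfloor$, i.e.\ on freeness of the action.

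The citation buys brevity and outsources routine points: unitarity of $\U_\V$, and the density step. For the latter, take $\alpha$ with trigonometric-polynomial components, so that $\U_\V^*\alpha$ is finitely supported and all sums are finite, then extend by boundedness.
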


\begin{figure}[ht]
\begin{tikzpicture}[scale=8,
  vert/.style={circle, fill=black, inner sep=1.8pt},
  ghost/.style={circle, fill=gray!55, inner sep=1.4pt},
  earrow/.style={
    decoration={markings,
      mark=at position 0.55 with
        {\arrow[black]{Stealth[length=5pt,width=4pt]}}},
    postaction=decorate, line width=1.0pt},
  darrow/.style={
    decoration={markings,
      mark=at position 0.55 with
        {\arrow[gray!60]{Stealth[length=4pt,width=3.5pt]}}},
    postaction=decorate, line width=0.8pt, dashed, gray!60},
  xarrow/.style={
    decoration={markings,
      mark=at position 0.55 with
        {\arrow[blue!55]{Stealth[length=4pt,width=3.5pt]}}},
    postaction=decorate, line width=0.8pt, dashed, blue!55},
]
 
  \coordinate (Z1) at (0.45, 0.26);
  \coordinate (Z2) at (0.75, 0.26);
  \coordinate (Z3) at (0.60, 0.52);
 
  \coordinate (Z1a1)     at (1.05, 0.26);
  \coordinate (Z2ma1)    at (0.15, 0.26);   
  \coordinate (Z3ma2)    at (0.90, 0.00);   
  \coordinate (Z3Lma2)   at (0.30, 0.00);   
 
  \coordinate (Za2ma1x2) at (0.45, 0.78);   
  \coordinate (Za2ma1x1) at (0.15, 0.78);   
  \coordinate (Zma1x3)   at (0.00, 0.52);   
 
  \clip (-0.20,-0.15) rectangle (1.28,0.95);
 
  \begin{scope}[opacity=0.15, scale=.3]
    \foreach \i in {-1,...,5}{
      \foreach \j in {-1,...,5}{
        \pgfmathsetmacro{\ii}{2*\i}
        \pgfmathsetmacro{\jj}{4*\j*0.86602540378}
        \begin{scope}[xshift=\ii cm,yshift=\jj cm]
          \draw(0:1)--(60:1)--(120:1)--(180:1)--(240:1)--(300:1)--(360:1);
          \begin{scope}[xshift=1cm,yshift={2*0.86602540378cm}]
            \draw(0:1)--(60:1)--(120:1)--(180:1)--(240:1)--(300:1)--(360:1);
          \end{scope}
        \end{scope}
      }
    }
  \end{scope}
 
  \draw[earrow](Z1)--(Z2)
    node[midway, below=5pt, scale=1.10]{$\he_1$};
  \draw[earrow](Z1)--(Z2ma1)
    node[midway, above=4pt, scale=1.10]{$\he_2$};
  \draw[earrow](Z1)--(Z3)
    node[midway, left=5pt, scale=1.10]{$\he_3$};
  \draw[earrow](Z1)--(Z3Lma2)
    node[pos=0.55, right=4pt, scale=1.10]{$\he_4$};
  \draw[earrow](Z2)--(Z3)
    node[midway, right=5pt, scale=1.10]{$\he_5$};
  \draw[earrow](Z2)--(Z3ma2)
    node[pos=0.55, right=4pt, scale=1.10]{$\he_6$};
 
  \draw[xarrow](Z3)--(Za2ma1x2)
    node[midway, right=4pt, scale=1.05]
      {$(a_2{-}a_1)\overline{\he_6}$};
  \draw[xarrow](Za2ma1x2)--(Za2ma1x1)
    node[midway, above=4pt, scale=1.05]
      {$(a_2{-}a_1)\overline{\he_1}$};
  \draw[xarrow](Za2ma1x1)--(Zma1x3)
    node[midway, left=4pt, scale=1.05]
      {$(a_2{-}a_1)\he_4$};
  \draw[xarrow](Zma1x3)--(Z2ma1)
    node[midway, left=4pt, scale=1.05]
      {$({-}a_1)\overline{\he_5}$};
  \draw[xarrow](Z3ma2)--(Z1a1)
    node[midway, right=4pt, scale=1.05]
      {$(a_1)\overline{\he_4}$};
  \draw[xarrow](Z1a1)--(Z2)
    node[midway, above=4pt, scale=1.05]
      {$(a_1)\he_2$};
 
  \node[ghost] at (Z2ma1)    {};
  \node[ghost] at (Z1a1)     {};
  \node[ghost] at (Z3ma2)    {};
  \node[ghost] at (Z3Lma2)   {};
  \node[ghost] at (Za2ma1x2) {};
  \node[ghost] at (Za2ma1x1) {};
  \node[ghost] at (Zma1x3)   {};
 
  \node[scale=1.00, gray!70, below=4pt]  at (Z3Lma2)
    {$(-a_2)\hx_3$};
  \node[scale=1.00, gray!70, below=4pt]  at (Z3ma2)
    {$(a_1-a_2)\hx_3$};
  \node[scale=1.00, gray!70, right=3pt]  at (Z1a1)
    {$(a_1)\hx_1{+}$};
  \node[scale=1.00, blue!60, right=3pt]  at (Za2ma1x2)
    {$(a_2{-}a_1)\hx_2$};
  \node[scale=1.00, blue!60, left=3pt]   at (Za2ma1x1)
    {$(a_2{-}a_1)\hx_1$};
  \node[scale=1.00, blue!60, left=3pt]   at (Zma1x3)
    {$({-}a_1)\hx_3$};
  \node[scale=1.00, blue!60, left=3pt]   at (Z2ma1)
    {$({-}a_1)\hx_2$};
 
  \node[vert] at (Z1) {};
  \node[vert] at (Z2) {};
  \node[vert] at (Z3) {};
 
  \node[scale=1.20, below left=3pt]  at (Z1){$\hx_1$};
  \node[scale=1.20, below right=3pt] at (Z2){$\hx_2$};
  \node[scale=1.20, above right=3pt] at (Z3){$\hx_3$};
 
  \node[scale=1.40, blue!70] at (0.9000, 0.1734){$\hf_3$};
  \node[scale=1.40, blue!70] at (0.3000, 0.5198){$\hf_1$};
  \node[scale=1.40, blue!70] at (0.6000, 0.3466){$\hf_2$};
 
\end{tikzpicture}
\caption{A particular choice of $\A_0,\V_0,\F_0$ for the Kagome lattice of Figure \ref{fig:kagome_and_quotient}. 
All the $\hf_j$ are considered with the counterclockwise orientation. The generators for the action of $\Z^2$
are indicated in Figure \ref{fig:kagome_and_quotient}(a).}
    \label{fig:fundamentalkagome}
\end{figure}
\begin{Example}
    Let us now consider the example of the Kagome lattice given in Figure \ref{fig:kagome_and_quotient} with the help of Figure \ref{fig:fundamentalkagome}. First, consider $\dd_0(\xi)\in M_{6\times 3}(\C)$ where each $i$-th line 
    gives the incident information for $\ee_i$:
    
    \begin{equation}
\dd_0(\xi)=\begin{pmatrix}
-1&1&0\\
-1&e^{-2\pi i\xi_1}&0\\
-1&0&1\\
-1&0&e^{-2\pi i\xi_2}\\
0&-1&1\\
0&-1&e^{2\pi i(\xi_1-\xi_2)}
\end{pmatrix}\ .\label{eq:d0Kagome}
\end{equation}
Moreover, $\dd_1(\xi)\in M_{3\times 6}(\C)$ where each $i$-th line gives the incident information for $\ff_i$ in terms of oriented edges:
\begin{equation}
\dd_1(\xi)=\begin{pmatrix}
-e^{2\pi i(\xi_2-\xi_1)}& -1 & 1 & e^{2\pi i(\xi_2-\xi_1)} & -e^{-2\pi i\xi_1} & -e^{2\pi i(\xi_2-\xi_1)} \\
1&0 & -1 & 0 & 1&0\\
0&e^{2\pi i\xi_1} & 0 & -e^{2\pi i\xi_1} & 0 & 1 
\end{pmatrix}\label{eq:d1Kagome}
    \end{equation}
\end{Example}

\subsection{Spectral reduction} \label{sub:spectral}

Our next aim is to study the spectrum of the operators $D_m$ and $G_m$ by exploiting their particular structure presented in \eqref{eq:Gmdef} and in \eqref{eq:Dmdef}. 
First of all, by Propositions \ref{prop_d1xi} and \ref{prop_d2xi} the operator $D_m$ is unitarily equivalent
to the matrix-valued function 
$$
\T^2\ni \xi \mapsto D_m(\xi):=\begin{pmatrix}
m \, I_\nv & \dd_0(\xi)^{*} & 0\\
\dd_0(\xi) & -m \, I_\ne &\dd_1(\xi)^* \\
0 & \dd_1(\xi) & m \, I_\nf
\end{pmatrix} \in M_{\nv+\ne+\nf}(\C).
$$
The first result is about the pointwise determinant of this function. 

\begin{Lemma}\label{lem_k3}
For $\xi \in \T^2$, the characteristic polynomial of $D_m(\xi)$ satisfies
\begin{equation}\label{eq_cara_2}
\det\big(D_m(\xi)-\lambda I_{\nv+\ne+\nf} \big) 
= \det \Big((\lambda^{2}-m^2)I_\ne - \dd_0(\xi)\dd_0(\xi)^*-\dd_1(\xi)^*\dd_1(\xi)\Big). 
\end{equation}
\end{Lemma}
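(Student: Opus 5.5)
The plan is a Schur complement with respect to the two outer diagonal blocks, followed by the Euler relation \eqref{eq:euler0}. Fix $\xi\in\T^2$ and write $\dd_0=\dd_0(\xi)$ and $\dd_1=\dd_1(\xi)$. Reorder the basis so that the vertex and face components come first and the edge components last; this is a simultaneous permutation of rows and columns, so it does not change the determinant. In the new ordering,
\begin{equation*}
D_m(\xi)-\lambda I=\begin{pmatrix} (m-\lambda)I_{\nv+\nf} & B^*\\ B & (-m-\lambda)I_\ne\end{pmatrix},
\qquad B:=\begin{pmatrix}\dd_0 & \dd_1^*\end{pmatrix}\in M_{\ne\times(\nv+\nf)}(\C).
\end{equation*}
The key feature is that the vertex and face blocks do not couple to each other directly, so the upper-left block is scalar.

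First I assume $\lambda\neq m$. The upper-left block is then invertible, and the Schur complement formula gives
\begin{equation*}
\det\big(D_m(\xi)-\lambda I\big)=(m-\lambda)^{\nv+\nf}\det\Big((-m-\lambda)I_\ne-\tfrac{1}{m-\lambda}BB^*\Big).
\end{equation*}
Here $BB^*=\dd_0\dd_0^*+\dd_1^*\dd_1$. I pull the factor $(m-\lambda)^{-1}$ out of the $\ne\times\ne$ determinant and use $(m-\lambda)(-m-\lambda)=\lambda^2-m^2$. This gives
\begin{equation*}
\det\big(D_m(\xi)-\lambda I\big)=(m-\lambda)^{\nv+\nf-\ne}\det\Big((\lambda^2-m^2)I_\ne-\dd_0\dd_0^*-\dd_1^*\dd_1\Big).
\end{equation*}

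The Euler relation \eqref{eq:euler0} gives $\nv-\ne+\nf=0$, so the prefactor equals $1$. This is the only genuinely geometric input: for a non-toroidal count the identity would carry an extra power of $(m-\lambda)$. The point to check is that $\nv,\ne,\nf$ are exactly the sizes of the three blocks of $D_m(\xi)$, and this holds by construction of $\U$.

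Finally, both sides of \eqref{eq_cara_2} are polynomials in $\lambda$ and they agree for all $\lambda\neq m$. Hence they agree identically, and the case $\lambda=m$ follows by continuity. No real obstacle is expected beyond keeping track of the signs in the Schur complement.
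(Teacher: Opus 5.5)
Your proof is correct and follows the paper's argument: a Schur complement with respect to the scalar blocks $(m-\lambda)I$, the Euler relation $\nv-\ne+\nf=0$ to cancel the prefactor $(m-\lambda)^{\nv+\nf-\ne}$, and the polynomial identity to cover $\lambda=m$. The only cosmetic difference is that you reorder to group vertices and faces and take a single Schur complement, whereas the paper eliminates the face block and then the vertex block in two successive steps.
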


\begin{proof}
By the block determinant formula, one gets for $\lambda \neq m$
\begin{align*}
&\det\big(D_m(\xi)-\lambda I_{\nv+\ne+\nf} \big)\\
& =\det \begin{pmatrix}
(m-\lambda) \, I_\nv & \dd_0(\xi)^{*} & 0\\
\dd_0(\xi) & (-m-\lambda) \, I_\ne &\dd_1(\xi)^* \\
0 & \dd_1(\xi) & (m-\lambda) \, I_\nf
\end{pmatrix} \\
& = \det \big((m-\lambda) \, I_\nf\big)  \det\left(\begin{pmatrix}
(m-\lambda) \, I_\nv & \dd_0(\xi)^{*} \\
\dd_0(\xi) & (-m-\lambda) \, I_\ne
\end{pmatrix}
- (m-\lambda)^{-1} \begin{pmatrix}
0 & 0\\
0 &\dd_1(\xi)^*\dd_1(\xi)
\end{pmatrix}\right) \\
& = (m-\lambda)^\nf
\ \det\begin{pmatrix}
(m-\lambda) \, I_\nv & \dd_0(\xi)^{*} \\
\dd_0(\xi) & (-m-\lambda) \, I_\ne - (m-\lambda)^{-1}\dd_1(\xi)^*\dd_1(\xi)
\end{pmatrix} \\
& = (m-\lambda)^{\nf+\nv}
\det \big(
(-m-\lambda) \, I_\ne - (m-\lambda)^{-1}\dd_1(\xi)^*\dd_1(\xi)
- (m-\lambda)^{-1}\dd_0(\xi)\dd_0(\xi)^*\big) \\
& = (m-\lambda)^{\nf+\nv-\ne}\,\det \bigl((\lambda^{2}-m^2)I_\ne - \dd_0(\xi)\dd_0(\xi)^*-\dd_1(\xi)^*\dd_1(\xi)\bigr).
\end{align*} 
By taking \eqref{eq:euler0} into account, the first factor vanishes. 
Finally, since both sides are polynomials in $\lambda$, the formula also holds for $\lambda=m$ by continuity. 
\end{proof}

Then, in order to compute the spectrum of $D_m$ let us study the matrix 
\begin{equation*}
-\Delta_\A(\xi):= \dd_0(\xi)\dd_0(\xi)^*+\dd_1(\xi)^*\dd_1(\xi)
\end{equation*}
For that purpose, note that the periodicity of the operators $\dd_0$ and $\dd_1$ implies that the Hodge decomposition \eqref{eq:Hodgedecomposition} can also be considered fiberwise (by conjugation with the unitary transform $\U_\A$)
and reads
\begin{equation}\label{eq_decHf}
\C^\ne=\Ran\big(\dd_0(\xi)\big)\oplus \Ker\big(\Delta_\A(\xi)\big) \oplus  \Ran\big(\dd_1(\xi)^*\big).
\end{equation}
Then, by setting 
$$-\Delta_\V(\xi):=\dd_0(\xi)^*\dd_0(\xi) \quad \hbox{ and }\quad -\Delta_\F(\xi):=\dd_1(\xi)\dd_1(\xi)^*
$$ 
we obtain the following characterization of $\sigma\big(-\Delta_\A(\xi)\big)$.

\begin{Proposition}\label{prop:spectrumduality}
For any $\xi\in\T^2$ one has
\begin{equation}\label{eq:spectrumduality}
\sigma\big(-\Delta_\A(\xi)\big)=\sigma\big(-\Delta_\V(\xi)\big)\cup\sigma\big(-\Delta_\F(\xi)\big).
\end{equation}
\end{Proposition}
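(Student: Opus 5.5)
Taking the fiberwise Hodge decomposition \eqref{eq_decHf} as the starting point, I will show that $-\Delta_\A(\xi)$ is reduced by the three summands. On each summand it acts as a nonzero piece of $-\Delta_\V(\xi)$, as $0$, or as a nonzero piece of $-\Delta_\F(\xi)$.

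The first ingredient is the fiberwise complex property $\dd_1(\xi)\dd_0(\xi)=0$. It follows from $\dd_1\dd_0=0$ on $X$: for $\f=(\e_1,\dots,\e_n)$ the sum $\sum_k \big(f(t(\e_k))-f(o(\e_k))\big)$ telescopes around the closed cycle. Propositions \ref{prop_d1xi} and \ref{prop_d2xi} then give $\U_\F\dd_1\dd_0\U_\V^*=\dd_1(\cdot)\dd_0(\cdot)=0$ a.e., and hence everywhere by real-analyticity. Consequently $\dd_1(\xi)^*\dd_1(\xi)$ vanishes on $\Ran\dd_0(\xi)$, and $\dd_0(\xi)\dd_0(\xi)^*$ vanishes on $\Ran\dd_1(\xi)^*$. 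So $-\Delta_\A(\xi)$ acts as $\dd_0(\xi)\dd_0(\xi)^*$ on $\Ran\dd_0(\xi)$ and as $\dd_1(\xi)^*\dd_1(\xi)$ on $\Ran\dd_1(\xi)^*$. Both subspaces are invariant: they are orthogonal by the same identity, and $\dd_0\dd_0^*$ maps into $\Ran \dd_0$. The middle summand is the kernel.

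Next I use the elementary linear-algebra fact that for any matrix $A$, the nonzero eigenvalues of $AA^*$ and $A^*A$ coincide with multiplicities, with $A$ intertwining the eigenspaces. Hence the spectrum of $\dd_0\dd_0^*$ restricted to $\Ran\dd_0=(\Ker \dd_0^*)^\perp$ is exactly $\sigma(-\Delta_\V(\xi))\setminus\{0\}$. Similarly, the spectrum of $\dd_1^*\dd_1$ on $\Ran\dd_1^*$ is $\sigma(-\Delta_\F(\xi))\setminus\{0\}$. Taking the union over the three reducing subspaces gives
\[
\sigma(-\Delta_\A(\xi))\setminus\{0\}=\big(\sigma(-\Delta_\V(\xi))\cup\sigma(-\Delta_\F(\xi))\big)\setminus\{0\}.
\]

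The main obstacle is the eigenvalue $0$. One must show that $0\in\sigma(-\Delta_\A(\xi))$ if and only if $0\in\sigma(-\Delta_\V(\xi))\cup\sigma(-\Delta_\F(\xi))$, and the decomposition alone does not settle this. I plan a dimension count via \eqref{eq:euler0}. Put $r_0=\rank\dd_0(\xi)$ and $r_1=\rank\dd_1(\xi)$. From \eqref{eq_decHf},
\[
\dim\Ker\Delta_\A(\xi)=\ne-r_0-r_1=(\nv-r_0)+(\nf-r_1)=\dim\Ker\Delta_\V(\xi)+\dim\Ker\Delta_\F(\xi),
\]
using $\ne=\nv+\nf$. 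Thus $\Ker\Delta_\A(\xi)\neq\{0\}$ exactly when one of the other two kernels is nontrivial. This closes the case $\lambda=0$, and the multiplicities even match exactly.

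Alternatively, I would cross-check this against Lemma \ref{lem_k3}. Computing $\det(D_m(\xi)-\lambda)$ by the other Schur complement, eliminating the edge block instead, should give the product $\det((\lambda^2-m^2)-(-\Delta_\V))\det((\lambda^2-m^2)-(-\Delta_\F))$. Comparing with \eqref{eq_cara_2} then gives equality of characteristic polynomials directly. I would, however, present the Hodge-decomposition argument as the main proof.
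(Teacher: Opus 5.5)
Your proposal is correct and follows essentially the same route as the paper: the fiberwise Hodge decomposition, the coincidence of the nonzero spectra of $BB^*$ and $B^*B$, and a dimension count using $\ne=\nv+\nf$ to handle the eigenvalue $0$. Your identity $\dim\Ker\Delta_\A(\xi)=\dim\Ker\Delta_\V(\xi)+\dim\Ker\Delta_\F(\xi)$ is a tidier packaging of the paper's two-case argument, and you usefully make explicit the identity $\dd_1(\xi)\dd_0(\xi)=0$, which the paper uses without stating it; your alternative Schur-complement cross-check (eliminating the edge block) also works, since the off-diagonal terms $\dd_0(\xi)^*\dd_1(\xi)^*$ and $\dd_1(\xi)\dd_0(\xi)$ vanish and the scalar prefactor becomes trivial by \eqref{eq:euler0}.
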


Before the proof, observe that $-\Delta_\A(\xi)$ is a $\ne\times \ne$ matrix, and therefore has $\ne$ eigenvalues
(multiplicity included). On the other hand,  $-\Delta_\V(\xi)$ and $-\Delta_\F(\xi)$ have respectively $\nv$ and $\nf$ eigenvalues.
Thus, the equality \eqref{eq:spectrumduality} is coherent, once the relation \eqref{eq:euler0} is taken into account.

\begin{proof}
The two key observations for this proof is that the operator
$\dd_0(\xi)\dd_0(\xi)^*$ acts non-trivially uniquely in the first entry of the direct sum in \eqref{eq_decHf}
while the operator $\dd_1(\xi)^*\dd_1(\xi)$ acts non-trivially only in the third entry of the direct sum. 
By the standard relation between the spectra of $BB^*$ and $B^*B$ for any matrix $B$, one then gets
\begin{align}\label{eq_with0}
\begin{split}
\sigma\big(-\Delta_\A(\xi)\big)\setminus \{0\}
=&\Big(\sigma\big(\dd_0(\xi)\dd_0(\xi)^*\big)\cup\sigma\big(\dd_1(\xi)^*\dd_1(\xi)\big)\Big)\setminus\{0\}\\
=&\Big(\sigma\big(\dd_0(\xi)^*\dd_0(\xi)\big)\cup\sigma\big(\dd_1(\xi)\dd_1(\xi)^*\big)\Big)\setminus\{0\}\\
=&\Big(\sigma\big(-\Delta_\V(\xi)\big)\cup\sigma\big(-\Delta_\F(\xi)\big)\Big)\setminus\{0\}.
\end{split}
\end{align}

Now, if we assume that $\Ker\big(-\Delta_\A(\xi)\big)=\{0\}$ we get by the decomposition \eqref{eq_decHf} that
\begin{equation*}
\nv=\rank\big(\dd_0(\xi)\big)=\rank \big(\dd_0(\xi)^*\dd_0(\xi)\big) = \rank\big(-\Delta_\V(\xi)\big)
\end{equation*}
and 
\begin{equation*}
\nf=\rank\big(\dd_1(\xi)^*\big)=\rank\big(\dd_1(\xi)\dd_1(\xi)^*\big) = \rank\big(-\Delta_\F(\xi)\big),
\end{equation*}
and hence the equalities in \eqref{eq_with0} hold without the need of subtracting $\{0 \}$. 

On the other hand, if $\Ker\big(-\Delta_\A(\xi)\big)\neq \{0\}$, we infer by a dimensional argument that
$\rank\big(\dd_0(\xi)\big)< \nv $ or $\rank\big(\dd_1(\xi)\big)^*<\nf$, or both if the degeneracy is of greater multiplicity. 
In both cases, it means that $0$ belongs to all sets (or union of sets) in \eqref{eq_with0},
and therefore these equalities again hold without the need of subtracting $\{0 \}$. This directly leads to the statement.
\end{proof}

By collecting the content of \eqref{eq_cara_2}, \eqref{eq:euler0} and \eqref{eq:spectrumduality} we get the following
description of the spectrum of $D_m$. Note that the last sentence in the statement
is obtained by analyticity of the map $\xi\mapsto D_m(\xi)$.

\begin{Theorem}\label{theo:Dm}
For any $m\geq 0$ one has 
\begin{equation*}
\sigma(D_m) =\bigcup_{\xi\in\T^2}\sigma\big(D_m(\xi)\big)
\end{equation*}
with
\begin{align*}
\sigma\big(D_m(\xi)\big)
&= \Big\{-\sqrt{\sigma\big(-\Delta_\V(\xi)\big)+m^2}\Big\}
\cup \Big\{\sqrt{\sigma\big(-\Delta_\V(\xi)\big)+m^2}\Big\}\\
&\quad \cup \Big\{-\sqrt{\sigma\big(-\Delta_\F(\xi)\big)+m^2}\Big\}
\cup \Big\{\sqrt{\sigma\big(-\Delta_\F(\xi)\big)+m^2}\Big\}\ .
\end{align*}
In addition, $D_m$ has a flat band if and only if $-\Delta_\V$ or $-\Delta_\F$ have one. 
\end{Theorem}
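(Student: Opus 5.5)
The proof proceeds in three steps. First, the decomposition $\sigma(D_m)=\bigcup_{\xi}\sigma(D_m(\xi))$ is the standard consequence of Floquet--Bloch theory. By Propositions \ref{prop_d1xi} and \ref{prop_d2xi}, $\U D_m\U^*$ is multiplication on $L^2(\T^2,\C^{\nv+\ne+\nf})$ by the continuous (indeed real-analytic) Hermitian matrix function $\xi\mapsto D_m(\xi)$. The spectrum of such a multiplication operator is the essential range of the function. By continuity of the eigenvalues in $\xi$ and compactness of $\T^2$, this essential range equals the closed set $\bigcup_\xi\sigma(D_m(\xi))$.

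Second, we compute $\sigma(D_m(\xi))$ pointwise. By Lemma \ref{lem_k3}, $\lambda\in\sigma(D_m(\xi))$ if and only if $\lambda^2-m^2\in\sigma(-\Delta_\A(\xi))$. Proposition \ref{prop:spectrumduality} replaces $\sigma(-\Delta_\A(\xi))$ by $\sigma(-\Delta_\V(\xi))\cup\sigma(-\Delta_\F(\xi))$. Since these matrices are nonnegative, solving $\lambda^2=\mu+m^2$ gives $\lambda=\pm\sqrt{\mu+m^2}$, which yields the four sets in the statement. For the set equality no multiplicity bookkeeping is needed. One subtlety concerns $m=0$ and $\mu=0$: then both signs collapse to $0$, which is consistent with the formula.

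Third, the flat band equivalence. Recall that a flat band means some eigenvalue branch is constant on $\T^2$. Equivalently, by analyticity, some $\lambda_0$ satisfies $\det(D_m(\xi)-\lambda_0)=0$ for all $\xi$. The characteristic polynomial is real-analytic in $\xi$, so vanishing on a set of positive measure forces vanishing identically; thus flat band means infinite multiplicity eigenvalue. Combining Lemma \ref{lem_k3} with a refinement of Proposition \ref{prop:spectrumduality} at the level of characteristic polynomials gives, for all $\xi$,
$$\det\big(\mu I_\ne+\Delta_\A(\xi)\big)=\det\big(\mu I_\nv+\Delta_\V(\xi)\big)\det\big(\mu I_\nf+\Delta_\F(\xi)\big).$$
This identity follows from the fiberwise Hodge decomposition \eqref{eq_decHf}, the equality of nonzero spectra with multiplicities for $BB^*$ and $B^*B$, and a count of zero eigenvalues using $\nv-\ne+\nf=0$. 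Then $\lambda_0$ is a flat band of $D_m$ if and only if $\lambda_0^2-m^2$ is a root, for every $\xi$, of this product. This holds if and only if the vertex factor or the face factor vanishes identically at $\mu_0=\lambda_0^2-m^2$. The "only if" direction uses that a product of two analytic functions on the connected torus vanishes identically only if one factor does. Finally, $\mu_0$ being a root of $\det(\mu I_\nv+\Delta_\V(\xi))$ for all $\xi$ is exactly a flat band of $-\Delta_\V$ at $\mu_0$, and similarly for faces. Conversely, a flat band $\mu_0$ of either Laplacian gives flat bands $\pm\sqrt{\mu_0+m^2}$ of $D_m$.

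The main obstacle is the multiplicity version of Proposition \ref{prop:spectrumduality}, specifically the zero eigenvalue. The set-level statement does not by itself track how many times $0$ occurs. One must check that $\dim\Ker(-\Delta_\A(\xi))=(\nv-\rank\dd_0(\xi))+(\nf-\rank\dd_1(\xi))$. This follows from \eqref{eq_decHf}, which gives $\dim\Ker=\ne-\rank\dd_0(\xi)-\rank\dd_1(\xi)$, together with the Euler relation. Alternatively, one could avoid this by arguing only with the analytic zero sets, but the determinant factorization is cleaner.
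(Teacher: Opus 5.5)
Your proposal is correct and follows essentially the same route as the paper. The paper obtains the theorem from the Floquet--Bloch reduction together with Lemma~\ref{lem_k3}, the Euler relation \eqref{eq:euler0} and Proposition~\ref{prop:spectrumduality}, and gets the flat-band equivalence ``by analyticity''. Your factorization $\det\big(\mu I_{\ne}+\Delta_{\A}(\xi)\big)=\det\big(\mu I_{\nv}+\Delta_{\V}(\xi)\big)\det\big(\mu I_{\nf}+\Delta_{\F}(\xi)\big)$ makes that last step explicit. As you note yourself, the set-level identity would already suffice, since a nontrivial real-analytic function on $\T^2$ has a nowhere dense zero set.
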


For comparison, by applying the same strategy, one gets the following spectral characterisation for $G_m$.

\begin{Theorem}
For $\xi \in \T^d$, let $G_m(\xi)\in M_{\nv+\ne}(\C)$ be defined by
\begin{equation*}
G_m(\xi) :=
\begin{pmatrix}
m \, I_\nv & \dd_0(\xi)^{*}\\
\dd_0(\xi) & -m \, I_\ne
\end{pmatrix}.
\end{equation*}
Then, for any $m\geq 0$ one has 
\begin{equation*}
\sigma\big(G_m(\xi)\big) = \{-m\} \cup 
\Big\{-\sqrt{\sigma\big(-\Delta_\V(\xi)\big)+m^2}\Big\}
\cup \Big\{\sqrt{\sigma\big(-\Delta_\V(\xi)\big)+m^2}\Big\}
\end{equation*}
and
\begin{equation*}
\sigma(G_m)=
\bigcup_{\xi \in \T^2}\sigma \big(G_m(\xi)\big)
= \{-m\}\cup \Big\{-\sqrt{\sigma(-\Delta_\V)+m^2}\Big\}
\cup \Big\{\sqrt{\sigma(-\Delta_\V)+m^2}\Big\},
\end{equation*}
where the eigenvalue $-m$ has multiplicity $\nf$.
\end{Theorem}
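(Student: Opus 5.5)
I will follow the same strategy as for $D_m$: compute the characteristic polynomial of $G_m(\xi)$ by a Schur complement, and then relate the spectrum of the remaining edge matrix to that of $-\Delta_\V(\xi)$ via the standard duality between $BB^*$ and $B^*B$. The global statement for $\sigma(G_m)$ then follows from the Floquet--Bloch unitary equivalence of \cite[Prop. 3.5]{Pa17} (equivalently the $\U_\V,\U_\A$ part of the transformation above together with Proposition \ref{prop_d2xi}). For a matrix-valued continuous multiplication operator on $\T^2$ the spectrum is the union of the fibre spectra, the union being closed by compactness and continuity.

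\textbf{Step 1: determinant.} For $\lambda\neq m$, eliminating the vertex block gives
\begin{equation*}
\det\big(G_m(\xi)-\lambda I_{\nv+\ne}\big)=(m-\lambda)^{\nv}\det\Big((-m-\lambda)I_\ne-(m-\lambda)^{-1}\dd_0(\xi)\dd_0(\xi)^*\Big).
\end{equation*}
Pulling out $(m-\lambda)^{-1}$ from the $\ne\times\ne$ determinant, this becomes
\begin{equation*}
(m-\lambda)^{\nv-\ne}\det\Big((\lambda^2-m^2)I_\ne-\dd_0(\xi)\dd_0(\xi)^*\Big).
\end{equation*}
Unlike in Lemma \ref{lem_k3}, the prefactor does not disappear: by \eqref{eq:euler0} one has $\nv-\ne=-\nf$.

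\textbf{Step 2: spectrum of $\dd_0\dd_0^*$.} The nonzero eigenvalues of $\dd_0(\xi)\dd_0(\xi)^*$ coincide, with multiplicity, with those of $-\Delta_\V(\xi)=\dd_0(\xi)^*\dd_0(\xi)$. The eigenvalue $0$ appears $\ne-\nv$ extra times; more precisely, its multiplicity is $\ne-\rank\dd_0(\xi)=\ne-\nv+\dim\Ker\dd_0(\xi)$. Writing the eigenvalues of $-\Delta_\V(\xi)$ as $\mu_1,\dots,\mu_\nv$, the determinant becomes
\begin{equation*}
\prod_{k=1}^{\nv}\big(\lambda^2-m^2-\mu_k\big)\cdot(\lambda^2-m^2)^{\ne-\nv}.
\end{equation*}
Multiplying by $(m-\lambda)^{-\nf}$ and using $\ne-\nv=\nf$ gives the characteristic polynomial
\begin{equation*}
(-1)^{\nf}(\lambda+m)^{\nf}\prod_{k=1}^{\nv}\big(\lambda^2-m^2-\mu_k\big).
\end{equation*}
This identity is polynomial in $\lambda$, so it extends to $\lambda=m$ by continuity.

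\textbf{Step 3: reading off the spectrum.} The roots are $\pm\sqrt{\mu_k+m^2}$, together with $-m$ of multiplicity (at least) $\nf$, independently of $\xi$. This gives the fibrewise formula. Since $-m$ is an eigenvalue of $G_m(\xi)$ for every $\xi$, it is a flat band, hence an eigenvalue of $G_m$ of infinite multiplicity, with $\nf$ bands sitting at $-m$. Taking the union over $\xi$ and using $\sigma(-\Delta_\V)=\bigcup_\xi\sigma(-\Delta_\V(\xi))$ yields the global formula.

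The main point of care is the multiplicity bookkeeping at $0$ in Step 2 when $\Ker\dd_0(\xi)\neq\{0\}$ (for instance at $\xi=0$, where the constants lie in the kernel). In that case the extra zero eigenvalues of $\dd_0\dd_0^*$ must be matched with zero eigenvalues of $-\Delta_\V(\xi)$, exactly as in the proof of Proposition \ref{prop:spectrumduality}. Working directly with the characteristic polynomial identity $\det(tI_\ne-BB^*)=t^{\ne-\nv}\det(tI_\nv-B^*B)$ avoids this case distinction altogether, and I would use that identity.
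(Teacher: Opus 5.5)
Your proposal is correct and follows the paper's own route. The paper obtains this theorem ``by applying the same strategy'' as for $D_m$: the block-determinant computation of Lemma~\ref{lem_k3} (where the prefactor $(m-\lambda)^{\nv-\ne}=(m-\lambda)^{-\nf}$ now survives by \eqref{eq:euler0} and yields the factor $(\lambda+m)^{\nf}$), combined with the $BB^*$/$B^*B$ relation of Proposition~\ref{prop:spectrumduality}. Your use of the identity $\det(tI_\ne-BB^*)=t^{\ne-\nv}\det(tI_\nv-B^*B)$ handles the zero-eigenvalue bookkeeping more cleanly than the paper's case distinction. Your reading of the multiplicity of $-m$ (exactly $\nf$ generically, at least $\nf$ where $\Ker\dd_0(\xi)\neq\{0\}$) is the accurate one.
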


As a consequence of these investigations, one observes that for every periodic graph, 
the operator $G_m$ exhibits a \emph{spurious flat band}, namely a flat band at $-m$ not linked 
to the precise geometry of the graph. For that reason, in order to extend the analysis of \cite{PeTa21,KeTaWi24}
to Dirac-type operators, the correct definition is given by $D_m$, not by $G_m$.

\section{Flat bands for the Dirac--Hodge operator on Archimedean tilings}\label{sec:flat}

In this section, we exploit Theorem \ref{theo:Dm} to study whether new flat band phenomena appear 
in the Archimedean tilings when considering the Dirac-type operator $D_m$. 
We start with the Kagome and the Super-Kagome tilings, since their spectrum is poised to exhibit flat bands coming from the vertex Laplacian, and then consider the remaining Archimedean lattices. At the end of this section, we study the Lieb lattice to contrast the results obtained in the Archimedean case. For ease of reference, we collect in Figure \ref{fig:11_tilings} the 11 Archimedean lattices with their Cundy \& Rollett's notation. 

\begin{figure}[ht]
  \centering

 \begin{tikzpicture}
  
\begin{scope}
\draw (1.25,-.5) node {$(4^4)$};

 \clip (0,0) rectangle (2.5,2.5); 
\begin{scope}[scale =.33, xshift = .33cm, yshift = .33cm]
   \draw (-1, -1) grid (8,8);
\end{scope}
\end{scope}
  
\begin{scope}[xshift = 3cm]
\draw (1.25,-.5) node {$(3^6)$};
 \clip (0,0) rectangle (2.5,2.5);
\begin{scope}[scale =.35]
  \foreach \i in {0,...,6}
    {
    \foreach \j in {0,...,3}
        {
        \pgfmathsetmacro{\jj}{2*\j*0.86602540378}
        \draw (\i,\jj) -- (\i+1,\jj);
        \draw (\i,\jj) -- (\i+1, \jj+2*0.86602540378);
        \draw (\i+1,\jj) -- (\i, \jj + 2*0.86602540378);
        \draw (\i, \jj + 0.86602540378) -- (\i + 1,\jj + 0.86602540378);
        }
    }
\end{scope}
\end{scope}

\begin{scope}[xshift = 6cm]
\draw (1.25,-.5) node {$(6^3)$};
 \clip (0,0) rectangle (2.5,2.5);
\begin{scope}[scale = .4, xshift = -.05cm]
  \foreach \i in {0,...,6}
    {
    \foreach \j in {0,...,3}
        {
        \pgfmathsetmacro{\ii}{2*\i*0.86602540378}
        \pgfmathsetmacro{\jj}{3*\j}
        \begin{scope}[xshift = \ii cm, yshift = \jj cm]
                \draw (30:1) -- (90:1) -- (150:1) -- (210:1) -- (270:1) -- (330:1) -- (30:1);
            
            \begin{scope}[xshift = 0.86602540378 cm , yshift = 1.5 cm]
                \draw (30:1) -- (90:1) -- (150:1) -- (210:1) -- (270:1) -- (330:1) -- (30:1);
            \end{scope}
        \end{scope}
        }
    }
\end{scope}
\end{scope}

\begin{scope}[yshift = -3.5cm]
\draw (1.25,-.5) node {$(3.6)^2$};
 \clip (0,0) rectangle (2.5,2.5);
\begin{scope}[scale = .2]
  \foreach \i in {0,...,6}
    {
    \foreach \j in {0,...,3}
        {
        \pgfmathsetmacro{\ii}{2*\i}
        \pgfmathsetmacro{\jj}{4*\j*0.86602540378}
        \begin{scope}[xshift = \ii cm, yshift = \jj cm]
                \draw (0:1) -- (60:1) -- (120:1) -- (180:1) -- (240:1) -- (300:1) -- (360:1);
                
            \begin{scope}[xshift = 1cm, yshift = 2*0.86602540378cm]
                \draw (0:1) -- (60:1) -- (120:1) -- (180:1) -- (240:1) -- (300:1) -- (360:1);
            \end{scope}
        \end{scope}
        }
    }
\end{scope}
\end{scope}    
    
\begin{scope}[xshift = 3cm, yshift = -3.5cm]
\draw (1.25,-.5) node {$(3.12^2)$};
 \clip (0,0) rectangle (2.5,2.5);
\begin{scope}[scale = .22] 
   \foreach \i in {0,...,6}
    {
    \foreach \j in {0,...,3}
        {
        \pgfmathsetmacro{\ii}{\i * (2*0.86602540378 + 2)}
        \pgfmathsetmacro{\jj}{\j * (4 * 0.86602540378 + 3)}
        \begin{scope}[xshift = \ii cm, yshift = \jj cm]
        \draw (0,0) -- (1,0) -- (.5,0.86602540378) -- (0,0);
        \draw (.5,0.86602540378) -- (.5,1.86602540378);
        \draw (.5,1.86602540378) -- (1,1.86602540378 + 0.86602540378) -- (0,1.86602540378 + 0.86602540378) -- (.5,1.86602540378);
            \begin{scope}
            \draw (210:1) -- (0,0);
            \end{scope}
            \begin{scope}[xshift = 1cm]
            \draw (330:1) -- (0,0);
            \end{scope}
            \pgfmathsetmacro{\l}{2*0.86602540378 + 1}
            \begin{scope}[yshift = \l cm]
            \draw (150:1) -- (0,0);
            \end{scope}
            \begin{scope}[xshift=1cm, yshift = \l cm]
            \draw (30:1) -- (0,0);
            \end{scope}
        \end{scope}
        
        \pgfmathsetmacro{\iii}{\ii + (0.86602540378 + 1)}
        \pgfmathsetmacro{\jjj}{\jj + (2 * 0.86602540378 + 1.5)}
        
        \begin{scope}[xshift = \iii cm, yshift = \jjj cm]
        \draw (0,0) -- (1,0) -- (.5,0.86602540378) -- (0,0);
        \draw (.5,0.86602540378) -- (.5,1.86602540378);
        \draw (.5,1.86602540378) -- (1,1.86602540378 + 0.86602540378) -- (0,1.86602540378 + 0.86602540378) -- (.5,1.86602540378);
            \begin{scope}
            \draw (210:1) -- (0,0);
            \end{scope}
            \begin{scope}[xshift = 1cm]
            \draw (330:1) -- (0,0);
            \end{scope}
            \pgfmathsetmacro{\l}{2*0.86602540378 + 1}
            \begin{scope}[yshift = \l cm]
            \draw (150:1) -- (0,0);
            \end{scope}
            \begin{scope}[xshift=1cm, yshift = \l cm]
            \draw (30:1) -- (0,0);
            \end{scope}
        \end{scope}
        }
    }
\end{scope}
\end{scope}

\begin{scope}[xshift=6cm, yshift=-3.5cm]
\draw (1.25,-.5) node {$(3^3.4^2)$};
 \clip (0,0) rectangle (2.5,2.5);
\begin{scope}[scale = .4, xshift = -0.05cm]

  \foreach \i in {-1,...,6}
    {
    \foreach \j in {0,...,2}
        {
        \pgfmathsetmacro{\jj}{\j * 2 * (1+0.86602540378)}
        \begin{scope}[xshift = \i cm, yshift = \jj cm]
            \draw (0,0) -- (0,1) -- (1,1) -- (1,0) -- (0,0);
            \draw(0,1) -- (0.5,1+0.86602540378) -- (1,1);
             \pgfmathsetmacro{\l}{1+0.86602540378}
                \begin{scope}[xshift = .5cm, yshift = \l cm]
                    \draw (0,0) -- (0,1) -- (1,1) -- (1,0) -- (0,0);
                    \draw(0,1) -- (0.5,1+0.86602540378) -- (1,1);
                \end{scope}
        \end{scope}
        }
    }
\end{scope}
\end{scope}

\begin{scope}[xshift=9cm, yshift=-3.5cm]
\draw (1.25,-.5) node {$(4.8^2)$};
 \clip (0,0) rectangle (2.5,2.5);
\begin{scope}[scale = .2, xshift = .25cm]
 \foreach \i in {0,...,3}
    {
    \foreach \j in {0,...,3}
        {
        \pgfmathsetmacro{\ii}{\i * 2 * (1+0.70710678118)}
        \pgfmathsetmacro{\jj}{\j * 2 * (1+0.70710678118)}
        \begin{scope}[xshift = \ii cm, yshift = \jj cm]
            \draw (0,0) -- (0,1) -- (1,1) -- (1,0) -- (0,0);
            \draw (0,0) -- (-0.70710678118,-0.70710678118);
            \draw (1,0) -- (1.70710678118,-0.70710678118);
            \draw (0,1) -- (-0.70710678118,1.70710678118);
            \draw (1,1) -- (1.70710678118,1.70710678118);
            \pgfmathsetmacro{\l}{1+0.70710678118}
                \begin{scope}[xshift = \l cm, yshift = \l cm]
                    \draw (0,0) -- (0,1) -- (1,1) -- (1,0) -- (0,0);
                    \draw (0,0) -- (-0.70710678118,-0.70710678118);
                    \draw (1,0) -- (1.70710678118,-0.70710678118);
                    \draw (0,1) -- (-0.70710678118,1.70710678118);
                    \draw (1,1) -- (1.70710678118,1.70710678118);
                \end{scope}
        \end{scope}
        }
    }
\end{scope}
\end{scope}

\begin{scope}[yshift = -7cm]
\draw (1.25,-.5) node {$(3^2.4.3.4)$};
 \clip (0,0) rectangle (2.5,2.5);
\begin{scope}[scale = .4]
 \foreach \i in {0,...,3}
    {
    \foreach \j in {0,...,3}
        {
        \pgfmathsetmacro{\l}{1 + 2* 0.86602540378}
        \pgfmathsetmacro{\ii}{\i * \l}
        \pgfmathsetmacro{\jj}{\j * \l}
        \begin{scope}[xshift = \ii cm, yshift = \jj cm]
            \pgfmathsetmacro{\ll}{0.86602540378 + 0.5}
            \begin{scope}[xshift = 0cm, yshift = 0cm]
            \draw (0,0) -- (1,0) -- (.5,-0.86602540378) -- (0,0);
            \draw (0,0) -- (1,0) -- (.5,0.86602540378) -- (0,0);  
            \end{scope}
            \begin{scope}[xshift = .5cm, yshift = 0.86602540378 cm]
                \draw (0,0) -- (0,1) -- (-0.86602540378, .5) -- (0,0);
                \draw (0,0) -- (0,1) -- (0.86602540378,.5) -- (0,0);
            \end{scope}
            \begin{scope}[xshift = \ll cm, yshift = \ll cm]
            \draw (0,0) -- (1,0) -- (.5,-0.86602540378) -- (0,0);
            \draw (0,0) -- (1,0) -- (.5,0.86602540378) -- (0,0);  
            \end{scope}
            \begin{scope}[xshift = 1.86602540378 cm, yshift = -.5cm]
                \draw (0,0) -- (0,1) -- (-0.86602540378, .5) -- (0,0);
                \draw (0,0) -- (0,1) -- (0.86602540378,.5) -- (0,0);
            \end{scope}
        \end{scope}
        }
    }
\end{scope}
\end{scope}

\begin{scope}[xshift = 3cm, yshift = -7cm]
\draw (1.25,-.5) node {$(3.4.6.4)$};
 \clip (0,0) rectangle (2.5,2.5);
\begin{scope}[scale = .28, xshift = .25cm, yshift = .25cm]
 \foreach \i in {0,...,3}
    {
    \foreach \j in {0,...,5}
        {
        \pgfmathsetmacro{\ii}{\i * 2 * (1 + 0.86602540378 + 0.5)}
        \pgfmathsetmacro{\jj}{\j * 2 * (1 + 0.86602540378 - 0.5)}
        \begin{scope}[xshift = \ii cm, yshift = \jj cm]
            \draw (0,0) -- (0,1) -- (1,1) -- (1,0) -- (0,0);
            \draw (0,0) -- (-0.86602540378, .5) -- (0,1);
            \draw (1,0) -- (1.86602540378, .5) -- (1,1);
            \draw (0,0) -- (-.5,-0.86602540378);
            \draw (0,1) -- (-.5,1.86602540378);
            \draw (1,0) -- (1.5,-0.86602540378);
            \draw (1,1) -- (1.5,1.86602540378);
            \begin{scope}[xshift = -0.86602540378 cm, yshift = .5 cm]
                \draw (0,0) -- (-.5,-0.86602540378);
                \draw (0,0) -- (-.5,0.86602540378);
            \end{scope}
            \begin{scope}[xshift = 1.86602540378 cm, yshift = .5 cm]
                \draw (0,0) -- (.5,-0.86602540378);
                \draw (0,0) -- (.5,0.86602540378);
            \end{scope}
             \pgfmathsetmacro{\lx}{1 + 0.86602540378 + 0.5}
             \pgfmathsetmacro{\ly}{1 + 0.86602540378 - 0.5}
                \begin{scope}[xshift = \lx cm, yshift = \ly cm]
                    \draw (0,0) -- (0,1) -- (1,1) -- (1,0) -- (0,0);
                    \draw (0,0) -- (-0.86602540378, .5) -- (0,1);
                    \draw (1,0) -- (1.86602540378, .5) -- (1,1);
                    \draw (0,0) -- (-.5,-0.86602540378);
                    \draw (0,1) -- (-.5,1.86602540378);
                    \draw (1,0) -- (1.5,-0.86602540378);
                    \draw (1,1) -- (1.5,1.86602540378);
                    \begin{scope}[xshift = -0.86602540378 cm, yshift = .5 cm]
                        \draw (0,0) -- (-.5,-0.86602540378);
                        \draw (0,0) -- (-.5,0.86602540378);
                    \end{scope}
                    \begin{scope}[xshift = 1.86602540378 cm, yshift = .5 cm]
                        \draw (0,0) -- (.5,-0.86602540378);
                        \draw (0,0) -- (.5,0.86602540378);
                    \end{scope}
            \end{scope}
            
        \end{scope}
        }
    }
\end{scope}
\end{scope}

\begin{scope}[xshift = 6cm, yshift = -7cm]
\draw (1.25,-.5) node {$(4.6.12)$};
 \clip (0,0) rectangle (2.5,2.5);
\begin{scope}[scale = .2, xshift = .25cm, yshift = .25cm]
  \foreach \i in {0,...,2}
    {
    \foreach \j in {0,...,3}
        {
            \pgfmathsetmacro{\ii}{\i * 2*(1.5 + 3* 0.86602540378)} 
            \pgfmathsetmacro{\jj}{\j* 2*(1.5 + 0.86602540378)}

        \begin{scope}[xshift = \ii cm, yshift = \jj cm]
            \draw (0,0) -- (0,1) -- (1,1) -- (1,0) -- (0,0);
            \begin{scope}[xshift = -0.86602540378cm, yshift = .5cm]
                \draw (30:1) -- (90:1) -- (150:1) -- (210:1) -- (270:1) -- (330:1) -- (30:1);
            \end{scope}
            \draw (-0.86602540378, 1.5) -- (-0.86602540378-.5, 1.5+0.86602540378); 
            \draw (-2* 0.86602540378, 1) -- (-2*0.86602540378-.5,1.86602540378); 
            \draw (-0.86602540378, -.5) -- (-0.86602540378-.5, -.5-0.86602540378); 
            \draw (-2* 0.86602540378, 0) -- (-2*0.86602540378-.5,-.86602540378); 
            \begin{scope}[xshift = 1.86602540378cm, yshift = .5cm]
                \draw (30:1) -- (90:1) -- (150:1) -- (210:1) -- (270:1) -- (330:1) -- (30:1);
            \end{scope}
                \draw (1.86602540378, 1.5) -- (1.86602540378+.5, 1.5+0.86602540378); 
                \draw (1+2* 0.86602540378, 1) -- (1+2*0.86602540378+.5,1.86602540378); 
                \draw (1.86602540378, -.5) -- (1.86602540378+.5, -.5-0.86602540378); 
                \draw (1+2* 0.86602540378, 0) -- (1+2*0.86602540378+.5,-.86602540378); 
            \pgfmathsetmacro{\lx}{1.5 + 3* 0.86602540378} 
            \pgfmathsetmacro{\ly}{1.5 + 0.86602540378} 
            
            \begin{scope}[xshift = \lx cm, yshift = \ly cm]

            \draw (0,0) -- (0,1) -- (1,1) -- (1,0) -- (0,0);
            \begin{scope}[xshift = -0.86602540378cm, yshift = .5cm]
                \draw (30:1) -- (90:1) -- (150:1) -- (210:1) -- (270:1) -- (330:1) -- (30:1);
            \end{scope}
            \draw (-0.86602540378, 1.5) -- (-0.86602540378-.5, 1.5+0.86602540378); 
            \draw (-2* 0.86602540378, 1) -- (-2*0.86602540378-.5,1.86602540378); 
            \draw (-0.86602540378, -.5) -- (-0.86602540378-.5, -.5-0.86602540378); 
            \draw (-2* 0.86602540378, 0) -- (-2*0.86602540378-.5,-.86602540378); 
            \begin{scope}[xshift = 1.86602540378cm, yshift = .5cm]
                \draw (30:1) -- (90:1) -- (150:1) -- (210:1) -- (270:1) -- (330:1) -- (30:1);
            \end{scope}
                \draw (1.86602540378, 1.5) -- (1.86602540378+.5, 1.5+0.86602540378); 
                \draw (1+2* 0.86602540378, 1) -- (1+2*0.86602540378+.5,1.86602540378); 
                \draw (1.86602540378, -.5) -- (1.86602540378+.5, -.5-0.86602540378); 
                \draw (1+2* 0.86602540378, 0) -- (1+2*0.86602540378+.5,-.86602540378); 

            \end{scope}

        \end{scope}
        }
    }
\end{scope}
\end{scope}

\begin{scope}[xshift = 9cm, yshift = -7cm]
\draw (1.25,-.5) node {$(3^4.6)$};
 \clip (0,0) rectangle (2.5,2.5);
\begin{scope}[scale = .3, xshift = -0.05cm, yshift = -0.05cm]
  \foreach \i in {0,...,5}
    {
    \foreach \j in {-1,...,4}
        {
        \pgfmathsetmacro{\ii}{\i * 2.5 + \j * .5} 
        \pgfmathsetmacro{\jj}{\i * 0.86602540378 + \j * 3 * 0.86602540378} 
        \begin{scope}[xshift = \ii cm, yshift = \jj cm]
        \draw (0:1) -- (60:1) -- (120:1) -- (180:1) -- (240:1) -- (300:1) -- (0:1);
         \pgfmathsetmacro{\r}{2* 0.86602540378} 
        \draw   (0:1)   -- (30:\r) -- 
                (60:1)  -- (90:\r) --
                (120:1) -- (150:\r) --
                (180:1) -- (210:\r) --
                (240:1) -- (270:\r) --
                (300:1) -- (330:\r) -- (0:1);
        \draw  (0:2) -- (60:2) -- (120:2) -- (180:2) -- (240:2) -- (300:2) -- (0:2);
        \draw  (0:1)   -- (0:2);
        \draw  (60:1)  -- (60:2);
        \draw  (120:1) -- (120:2);
        \draw  (180:1) -- (180:2);
        \draw  (240:1) -- (240:2);
        \draw  (300:1) -- (300:2);
        \end{scope}
        }
    }
\end{scope}
\end{scope}

 \end{tikzpicture}

\caption{The $11$ Archimedean tilings. The three Platonic lattices are represented on the first line.
The first two of the second line correspond to the Kagome and Super-Kagome lattices. Figure reproduced from \cite{PeTa21}.}
\label{fig:11_tilings}
\end{figure}

\subsection{Flat bands for Kagome and Super-Kagome tilings}\label{subsec:kagomeandsuper}

We start by studying the Kagome lattice $(3.6)^2$. 
We know from \cite{PeTa21} that the operator $-\Delta_\V\big((3.6)^2\big)$ has a flat band at $6$. 
In fact, in \cite{PeTa21} the authors study the normalised Laplacian that corresponds to our Laplacian 
\eqref{eq:Deltavertices} divided by $\deg_0(\cdot)$. Since $\deg_0(\cdot)$ is constant for the Archimedean lattices, 
we can obtain the band functions of $-\Delta_\V$ by just multiplying their band functions by $\deg_0(\XX)$. 
Hence, it only remains to study $-\Delta_\F\big((3.6)^2\big)$, which can be obtained by studying
$-\Delta_\F(\xi)=\dd_1(\xi)\dd_1(\xi)^*$ for all $\xi \in \T^2$.
Since $\dd_1(\xi)$ was already computed in \eqref{eq:d1Kagome} we can directly get that $-\Delta_\F(\xi)$ 
is given by
\begin{equation*}
\scalemath{0.90}{\begin{pmatrix}
6 
  & -1 - e^{-2\pi i\xi_1} - e^{2\pi i(\xi_2-\xi_1)}
  & -e^{-2\pi i\xi_1} - e^{2\pi i(\xi_2-2\xi_1)} - e^{2\pi i(\xi_2-\xi_1)}) \\[4pt]
-1 - e^{2\pi i\xi_1} - e^{-2\pi i(\xi_2-\xi_1)}) 
  & 3 
  & 0 \\[4pt]
-e^{2\pi i\xi_1} - e^{-2\pi i(\xi_2-2\xi_1)} - e^{-2\pi i(\xi_2-\xi_1)}
  & 0 
  & 3
\end{pmatrix}}.
\end{equation*}
The corresponding characteristic polynomial, defined by
$$
p(\lambda; \xi_1,\xi_2) :=\det\Big(-\Delta_\F(\xi_1,\xi_2)-\lambda I_\nf\Big),
$$
reads
\begin{align*}
p(\lambda; \xi_1,\xi_2) 
=
(\lambda-3)\Big[-\lambda^2+9\lambda-12
+ 4\big(\cos(2 \pi \xi_1) + \cos(2 \pi (\xi_1 - \xi_2)) +  \cos(2 \pi \xi_2)\big)\Big].
\end{align*}
As a consequence, the eigenvalues of $-\Delta_\F(\xi_1,\xi_2)$ are
given by
\begin{align*}
\lambda_1(\xi_1,\xi_2) & = 3, \\
\lambda_2(\xi_1,\xi_2) &  = \frac{9 - \sqrt{33 + 16\big(\cos(2 \pi \xi_1) + \cos(2 \pi (\xi_1 - \xi_2)) +  \cos(2 \pi \xi_2)\big)}}{2}, \\
\lambda_3(\xi_1,\xi_2) & = \frac{9 + \sqrt{33 + 16\big(\cos(2 \pi \xi_1) + \cos(2 \pi (\xi_1 - \xi_2)) +  \cos(2 \pi \xi_2)\big)}}{2}.
\end{align*}
We then conclude from Theorem \ref{theo:Dm} that $D_m\big((3.6)^2)\big)$ has four flat bands: 
two related to $-\Delta_\V$ at $\pm\sqrt{6+m^2}$ and two related to $-\Delta_\F$ at $\pm\sqrt{3+m^2}$. 

\begin{Remark}\label{rem_invariance}
1) It is readily observed that the expressions for the matrices $\dd_1(\xi)$ and
$\dd_1(\xi)\dd_1(\xi)^*$ depend on the choice of $\V_0$, $\A_0$, and $\F_0$.
However, the expression for the above characteristic polynomial and for the eigenvalues does not depend on this choice.
On the other hand, these expressions also depend on the choice of generators for the action
of $\Z^2$, but the spectrum of $-\Delta_\F$ (obtained by taking a union over all $\xi \in \T^2$)
does not. We refer to \cite{KR} for a precise discussion about these dependences.

2) We also note that the function
\begin{equation*}
\T^2\ni (\xi_1,\xi_2) \mapsto \cos(2 \pi \xi_1) + \cos(2 \pi (\xi_1 - \xi_2)) +  \cos(2 \pi \xi_2)\in \C
\end{equation*} 
has been studied in \cite[Sec.~3.1]{PeTa21}, and similar investigations would directly lead 
to a formula for the integrated density of states. 
\end{Remark}

\begin{figure}[t]
\centering
\begin{minipage}[t]{0.58\textwidth}
  \centering
  \begin{tikzpicture}[scale=2]
  \begin{scope}[yshift=-3.5cm]
  \clip (0,0) rectangle (2.5,2);
  \coordinate (P) at (0.425,0.725);
  \pgfmathsetmacro{\sq}{0.86602540378}
  \pgfmathsetmacro{\aonex}{2*\sq+2}
  \pgfmathsetmacro{\aoney}{0}
  \pgfmathsetmacro{\atwox}{(\sq+1)*0.225}
  \pgfmathsetmacro{\atwoy}{(2*\sq+1.5)*0.225}
  \coordinate (aone) at ({\aonex*0.225},{\aoney*0.225});
  \coordinate (atwo) at ({\atwox},{\atwoy});
  \coordinate (P1)  at ($(P)+(aone)$);
  \coordinate (P2)  at ($(P)+(atwo)$);
  \coordinate (P12) at ($(P)+(aone)+(atwo)$);
  \fill[blue!10, opacity=0.45] (P) -- (P1) -- (P12) -- (P2) -- cycle;
  \draw[->, thick, blue!80] (P) -- (P1) node[midway, below] {$a_1$};
  \draw[->, thick, blue!80] (P) -- (P2) node[midway, left]  {$a_2$};
  \begin{scope}[scale=.225]
    \foreach \i in {0,...,6}{
      \foreach \j in {0,...,3}{
        \pgfmathsetmacro{\ii}{\i*(2*0.86602540378+2)}
        \pgfmathsetmacro{\jj}{\j*(4*0.86602540378+3)}
        \begin{scope}[xshift=\ii cm, yshift=\jj cm]
          \draw (0,0)--(1,0)--(.5,0.86602540378)--(0,0);
          \draw (.5,0.86602540378)--(.5,1.86602540378);
          \draw (.5,1.86602540378)--(1,1.86602540378+0.86602540378)
                --(0,1.86602540378+0.86602540378)--(.5,1.86602540378);
          \draw (210:1)--(0,0);
          \begin{scope}[xshift=1cm]
            \draw (330:1)--(0,0);
          \end{scope}
          \pgfmathsetmacro{\l}{2*0.86602540378+1}
          \begin{scope}[yshift=\l cm]
            \draw (150:1)--(0,0);
          \end{scope}
          \begin{scope}[xshift=1cm,yshift=\l cm]
            \draw (30:1)--(0,0);
          \end{scope}
        \end{scope}
        \pgfmathsetmacro{\iii}{\ii+(0.86602540378+1)}
        \pgfmathsetmacro{\jjj}{\jj+(2*0.86602540378+1.5)}
        \begin{scope}[xshift=\iii cm,yshift=\jjj cm]
          \draw (0,0)--(1,0)--(.5,0.86602540378)--(0,0);
          \draw (.5,0.86602540378)--(.5,1.86602540378);
          \draw (.5,1.86602540378)--(1,1.86602540378+0.86602540378)
                --(0,1.86602540378+0.86602540378)--(.5,1.86602540378);
          \draw (210:1)--(0,0);
          \begin{scope}[xshift=1cm]
            \draw (330:1)--(0,0);
          \end{scope}
          \pgfmathsetmacro{\l}{2*0.86602540378+1}
          \begin{scope}[yshift=\l cm]
            \draw (150:1)--(0,0);
          \end{scope}
          \begin{scope}[xshift=1cm,yshift=\l cm]
            \draw (30:1)--(0,0);
          \end{scope}
        \end{scope}
      }
    }
  \end{scope}
  \end{scope}
  
  \end{tikzpicture}
  
  \vspace{2pt}
  
  {\footnotesize (a) $(3.12^2)$ Super-Kagome lattice and a fundamental domain.}
  
\end{minipage}\hfill
\begin{minipage}[t]{0.38\textwidth}
  \centering
\begin{tikzpicture}[scale=3,
  vtx/.style={draw, circle, fill=white, inner sep=1pt},
  edge/.style={line width=0.55pt}
]
\draw[edge, gray!50] (0,0) rectangle (1,1);

\coordinate (v1) at (0,     0);
\coordinate (v2) at (0.134, 0);
\coordinate (v3) at (0,     0.268);
\coordinate (v4) at (0.845, 0.577);
\coordinate (v5) at (0.577, 0.845);
\coordinate (v6) at (0.845, 0.845);

\coordinate (v1R)  at (1,     0);
\coordinate (v3R)  at (1,     0.268);
\coordinate (v1T)  at (0,     1);
\coordinate (v2T)  at (0.134, 1);
\coordinate (v1TR) at (1,     1);

\fill[gray!15] (v1)--(v2)--(v3)--cycle;
\fill[gray!30] (v4)--(v5)--(v6)--cycle;

\draw[edge] (v1)--(v2)--(v3)--(v1);
\draw[edge] (v4)--(v5)--(v6)--(v4);

\draw[edge] (v5)--(v2T);
\draw[edge] (v6)--(v1TR);
\draw[edge] (v4)--(v3R);

\node[vtx, label={[font=\tiny]below left:$\xx_1$}] at (v1)  {};
\node[vtx, label={[font=\tiny]below:$\xx_2$}]      at (v2)  {};
\node[vtx, label={[font=\tiny]left:$\xx_3$}]       at (v3)  {};
\node[vtx, label={[font=\tiny]left:$\xx_4$}]       at (v4)  {};
\node[vtx, label={[font=\tiny]above:$\xx_5$}]      at (v5)  {};
\node[vtx, label={[font=\tiny]above:$\xx_6$}]      at (v6)  {};

\node[vtx] at (v1R)  {};
\node[vtx] at (v3R)  {};
\node[vtx] at (v1T)  {};
\node[vtx] at (v2T)  {};
\node[vtx] at (v1TR) {};

\end{tikzpicture}
  
  \vspace{2pt}
  
  {\footnotesize (b) Quotient graph on the flat torus.}
\end{minipage}
\caption{(a) $(3.12^2)$ Super-Kagome lattice with a fundamental domain
for the $\Z^2$-action. (b) The induced quotient graph embedded in the
flat torus model of $\T^2$. In this case $\nv=6$, $\ne=9$, and $\nf=3$.}
\label{fig:superkagome_and_quotient}
\end{figure}

Let us turn our attention to the $(3.12^2)$ tiling also dubbed \emph{Super-Kagome lattice} in \cite{KeTaWi24}. 
We already know from \cite{PeTa21} that $-\Delta_\V$ has two flat bands, at $3$ and at $5$.  
In addition, for a suitable choice of $\V_0$, $\A_0$, and $\F_0$ one gets 
\begin{equation*}
\dd_1(\xi_1,\xi_2)=
\scalemath{0.80}{
\begin{pmatrix}
e^{2\pi i\xi_2}& 1 & e^{2\pi i\xi_1} & 1-e^{2\pi i\xi_1} & 1 & e^{2\pi i(\xi_1-\xi_2)} & e^{2\pi i\xi_1} & 1-e^{2\pi i(\xi_1-\xi_2)} & e^{2\pi i(\xi_1-\xi_2)}-e^{2\pi i\xi_1} \\ 
1& 1 & 1 & 0 & 0 & 0 & 0 & 0 & 0 \\ 
0& 0 & 0 & 0 & 1 & 1 & 1 & 0 & 0 
\end{pmatrix}.}
\end{equation*}
By a straightforward computation, one gets an expression for the square matrix 
$\dd_1(\xi)\dd_1(\xi)^*$, namely
\begin{equation}\label{eq:facesuper}
-\Delta_\F(\xi)=
\begin{pmatrix}
12 - 2F(\xi_1,\xi_2)
  & 1+e^{2\pi i\xi_1}+e^{2\pi i\xi_2} 
  & 1+e^{2\pi i\xi_1}+e^{2\pi i(\xi_1-\xi_2)} \\
1+e^{-2\pi i\xi_1}+e^{-2\pi i\xi_2} & 3 & 0 \\
1+e^{-2\pi i\xi_1}+e^{-2\pi i(\xi_1-\xi_2)} & 0 & 3
\end{pmatrix},
\end{equation}
with 
$F(\xi_1,\xi_2):=\cos(2\pi \xi_1) + \cos(2\pi (\xi_1 - \xi_2)) + \cos(2\pi \xi_2)$.
The characteristic polynomial then reads
\begin{equation}\label{eq:superpoly}
p(\lambda; \xi_1,\xi_2) =
(\lambda-3)\Big[-\lambda^2 +15\lambda -30 -2(\lambda-5)F(\xi_1,\xi_2)\Big].
\end{equation}
Clearly $\lambda=3$ is a common eigenvalue of $-\Delta_\F(\xi)$ for all $(\xi_1,\xi_2)\in \T^2$,
and this turns out to be the only common eigenvalue. Indeed, 
since the constant function $1$ and $F$ are linearly independent, one observes that the second term in \eqref{eq:superpoly} would vanish identically in $\T^2$ if and only if $-\lambda^2 +15\lambda-30$ and $\lambda-5$ share a common root. 
Since this is not the case, we conclude that $D_m\big((3.12^2)\big)$ has six flat bands: 
two related to $-\Delta_\V$ at $\pm\sqrt{5+m^2}$ and four related to both $-\Delta_\V$ and $-\Delta_\F$ 
at $\pm\sqrt{3+m^2}$. In this case, the new flat bands coincide with the previous ones.

\begin{Remark}
One of the interesting results obtained in \cite{KeTaWi24} is the fact that if we consider 
a periodically weighted version of the Laplacian on $(3.12^2)$, it is possible to have either $0$, $1$, or $2$ flat bands. 
However, the $1$ and $2$ flat-bands are disconnected in the space of parameters indexing such perturbations. 
It would be interesting to understand what happens now since we have three sources of flat bands at our disposal,  
and in particular since some flat bands coincide in value but have support in orthogonal spaces.
\end{Remark}

\subsection{Absence of flat bands for all other Archimedean tilings}

In this section, we obtain that for the remaining nine Archimedean lattices, 
the operator $D_m$ does not exhibit any flat band.
The argument is based on Theorem \ref{theo:Dm} and once again we can reduce our investigation
to the study of $-\Delta_\F$ since it is shown in \cite{KeTaWi24}
that the operators $-\Delta_\V$ do not possess flat bands. 

The argument is similar to the one already used for the study of the Super-Kagome lattice:
we exhibit the matrices $\dd_1(\xi_1,\xi_2)$ (which depends on the choice of the basis 
for $\V_0$, $\A_0$, and $\F_0$, as emphasized in Remark \ref{rem_invariance}), 
and then provide the characteristic polynomials for $-\Delta_\F(\xi)=\dd_1(\xi)\dd_1(\xi)^*$.
Since the computation of the matrices $\dd_1(\xi)\dd_1(\xi)^*$ is straightforward, we do not 
present their expressions.
The characteristic polynomials are presented in such a way that 
the coefficients of all linearly independent functions are clearly visible.
It remains then to observe that for none of them there exists a solution $\lambda$ of the equation
$p(\lambda; \xi_1,\xi_2)=0$ which is independent of $(\xi_1,\xi_2)$. Indeed, such a solution
would cancel all mentioned coefficients, which is clearly impossible in all the examples
presented below. In fact, it turns out that there always exists at least one trigonometric function
of $(\xi_1,\xi_2)$ whose coefficient does not depend on $\lambda$. 
This automatically leads to the absence of a solution of the characteristic equation
independent of $\xi$.

In the rest of this section, we provide the matrices $\dd_1(\xi_1,\xi_2)$
and the characteristic polynomials $p(\lambda; \xi_1,\xi_2)$ for the matrices 
$-\Delta_\F(\xi_1,\xi_2)$ of the remaining nine Archimedean lattices.

\paragraph{$(4^4)$ Square lattice}

\begin{equation*}
\dd_1(\xi_1,\xi_2)=\begin{pmatrix}
1 -e^{2 \pi i \xi_2} & e^{2 \pi i \xi_1}-1
\end{pmatrix}.
\end{equation*}

\begin{equation*}
p(\lambda; \xi_1,\xi_2) =-\lambda+4-2\cos(2\pi\xi_1)-2\cos(2\pi\xi_2). 
\end{equation*}

\paragraph{$(3^6)$ Triangular lattice}

\begin{equation*}
\dd_1(\xi_1,\xi_2)=\begin{pmatrix}
1& -1 & e^{2 \pi i \xi_1}\\
-e^{-2\pi i(\xi_1-\xi_2)}&1 &-1
\end{pmatrix}.
\end{equation*}

\begin{align*}
p(\lambda; \xi_1,\xi_2)
=6 - 6\lambda + \lambda^2
- 2\cos(2 \pi \xi_1) - 2\cos(2 \pi (\xi_1 - \xi_2)) - 2\cos(2 \pi \xi_2). 
\end{align*}

\paragraph{$(6^3)$ Hexagonal lattice}

\begin{equation*}
\dd_1(\xi_1,\xi_2)=\begin{pmatrix}
1 -e^{-2 \pi i (\xi_1-\xi_2)} & -1+e^{2 \pi i \xi_2} &-e^{2 \pi i \xi_2}+e^{-2 \pi i (\xi_1-\xi_2)}
\end{pmatrix}.
\end{equation*}

\begin{equation*}
p(\lambda; \xi_1,\xi_2) =-\lambda+6-2\cos(2\pi\xi_1) -2\cos(2\pi(\xi_1-\xi_2)) -2\cos(2\pi\xi_2).
\end{equation*}

\paragraph{$(3^3.4^2)$ Elongated lattice}

\begin{equation*}
\dd_1(\xi_1,\xi_2)=
\begin{pmatrix}
1-e^{2i\pi\xi_1} & 1 & -1 & 0 &0 \\
0 & 0 & 1 & -e^{2i\pi(\xi_1-\xi_2)} & e^{2i\pi(\xi_1-\xi_2)}\\
0 & -1 & 0 & 1 & -e^{2i\pi\xi_1}
\end{pmatrix}.
\end{equation*}

\begin{align*}
p(\lambda; \xi_1,\xi_2)
&=24-29\lambda+10\lambda^2-\lambda^3 +2(-11+7\lambda-\lambda^2) \cos(2\pi\xi_1) \\
&\quad +2\cos(4\pi\xi_1)-2\cos(2\pi(\xi_1-\xi_2))-2\cos(2\pi\xi_2).
\end{align*}

\paragraph{$(4.8^2)$ Truncated square lattice}

\begin{equation*}
\dd_1(\xi_1,\xi_2)=\begin{pmatrix}
1&1&1&1&0&0 \\
1& e^{-2i\pi\xi_1}&e^{-2i\pi(\xi_1+\xi_2)}&e^{-2i\pi\xi_2}&e^{-2i\pi\xi_2}-1&1-e^{-2i\pi\xi_1}
\end{pmatrix}.
\end{equation*}

\begin{align*}
p(\lambda; \xi_1,\xi_2)
&=28-12\lambda+\lambda^2 +2(-6+\lambda)\big(\cos(2\pi\xi_1)+\cos(2\pi\xi_2)\big)\\
&\quad -2\cos(2\pi(\xi_1-\xi_2))-2\cos(2\pi(\xi_1+\xi_2)).
\end{align*}

\paragraph{$(3^2.4.3.4)$ Snub square lattice}

\begin{align*}
&\dd_1(\xi_1,\xi_2) \\
& =\begin{pmatrix}
1& 1 & 1 & 1 & 0 & 0 & 0 & 0 & 0 & 0 \\ 
0& -1 & 0 & 0 & 0 & 0 & -1 & 0 & 0 & 1 \\
0& 0 & 0 & -e^{-2\pi i\xi_1} & -e^{-2\pi i\xi_1} & 0 & 0 & 0 & 0 & -1 \\ 
0& 0 & 0 & 0 & e^{-2\pi i\xi_1} & 1 & e^{2\pi i(\xi_2-\xi_1)} & e^{2\pi i(\xi_2-2\xi_1)} & 0 & 0 \\ 
-1& 0 & 0 & 0 & 0 & -1 & 0 & 0 & 1 & 0 \\ 
0& 0 & e^{2\pi i(\xi_2-\xi_1)} & 0 & 0 & 0 & 0 & e^{2\pi i(\xi_2-\xi_1)} & 1 & 0  
\end{pmatrix}\end{align*}

\begin{align*}
p(\lambda; \xi_1,\xi_2)
&= 340- 1088 \lambda + 1180 \lambda^2 - 600 \lambda^3 + 156 \lambda^4 - 20 \lambda^5 + \lambda^6 \\
&\quad + 2\cos(4 \pi \xi_1) + 2\cos(4 \pi (\xi_1 - \xi_2)) \\
&\quad +  2(-14 + 8 \lambda - \lambda^2)\big(\cos(2 \pi (2 \xi_1 - \xi_2)) + \cos(2 \pi \xi_2)\big) \\
&\quad +  4(-36 + 36 \lambda - 11 \lambda^2 + \lambda^3)\big(\cos(2 \pi \xi_1)+ \cos(2 \pi (\xi_1 - \xi_2))\big).
\end{align*}

\paragraph{$(3.4.6.4)$ Small rhombi-trihexagonal lattice}

\[
\dd_1(\xi_1,\xi_2) 
=\scalemath{0.90}{\begin{pmatrix}
1 & 1 & 1 & 1 & 1 & 1 & 0 & 0 & 0 & 0 & 0 & 0 \\ 
1 & 0 & 0 & e^{-2\pi i\xi_2} & 0 & 0 & 0 & 0 & e^{-2\pi i\xi_2} & 1 & 0 & 0 \\ 
0 & 1 & 0 & 0 & e^{2\pi i(\xi_1-\xi_2)} & 0 & 0 & 1 & 0 & 0 & 0 & e^{2\pi i(\xi_1-\xi_2)} \\ 
0 & 0 & 1 & 0 & 0 & e^{2\pi i\xi_1} & e^{2\pi i\xi_1} & 0 & 0 & 0 & 1 & 0 \\ 
0 & 0 & 0 & 0 & 0 & 0 & 0 & 0 & 0 & 1 & e^{-2\pi i\xi_2} & e^{2\pi i(\xi_1-\xi_2)} \\ 
0 & 0 & 0 & 0 & 0 & 0 & e^{2\pi i\xi_1} & 1 & e^{2\pi i(\xi_1-\xi_2)} & 0 & 0 & 0
\end{pmatrix}}
\]

\begin{align*}
&p(\lambda; \xi_1,\xi_2)\\
&=1146- 2880 \lambda + 2520 \lambda^2 - 1048 \lambda^3 + 225 \lambda^4 - 24 \lambda^5 + \lambda^6\\
&\quad + 2\cos(4  \pi \xi_1) - 4\cos(2 \pi (\xi_1 - 2 \xi_2)) +2 \cos( 4 \pi (\xi_1 - \xi_2) ) \\
&\quad - 4\cos(2 \pi (2 \xi_1 - \xi_2))+ 2\cos(4 \pi \xi_2)- 4 \cos(2 \pi (\xi_1 + \xi_2))  \\
&\quad +  2(-190 + 192 \lambda - 76 \lambda^2 + 14 \lambda^3 - \lambda^4) 
\big(\cos(2 \pi \xi_1)+ \cos(2 \pi (\xi_1 - \xi_2)) + \cos(2 \pi \xi_2)\big).
\end{align*}

\paragraph{$(4.6.12)$ Great rhombi-trihexagonal lattice}

\begin{align*}
&\dd_1(\xi_1,\xi_2) =\\
& \scalemath{0.68}{\begin{pmatrix}
0 & e^{2\pi i(\xi_2-\xi_1)} & 0 & e^{-2\pi i\xi_1} & e^{-2\pi i\xi_1} & e^{-2\pi i\xi_1} & 0 & e^{-2\pi i\xi_2} & 0 & 1 & 1 & 1 & 0 & 0 & -e^{-2\pi i\xi_2} & e^{-2\pi i\xi_2} & e^{-2\pi i\xi_1} & -1 \\
1 & 0 & 0 & 0 & 0 & 0 & e^{-2\pi i\xi_2} & 0 & 0 & 0 & 0 & 0 & 0 & 0 & e^{-2\pi i\xi_2} & 0 & -e^{-2\pi i\xi_2} & 0 \\ 
1 & 1 & 1 & 1 & 0 & 0 & 0 & 0 & 0 & 0 & 0 & 1 & -1 & 0 & 0 & 0 & 0 & 0 \\ 
0 & 0 & 0 & 0 & 1 & 0 & 0 & 0 & 0 & 0 & 1 & 0 & 1 & 1 & 0 & 0 & 0 & 0 \\ 
0 & 0 & 0 & 0 & 0 & 1 & 1 & 1 & 1 & 1 & 0 & 0 & 0 & -1 & 0 & 0 & 0 & 0 \\ 
0 & 0 & e^{2\pi i(\xi_2-\xi_1)} & 0 & 0 & 0 & 0 & 0 & 1 & 0 & 0 & 0 & 0 & 0 & 0 & -1 & 0 & 1 
\end{pmatrix}}
\end{align*}

\begin{align*}
&p(\lambda; \xi_1,\xi_2)\\
&= 13434- 20832 \lambda + 12048 \lambda^2 - 3400 \lambda^3 + 498 \lambda^4 - 36 \lambda^5 + \lambda^6\\
&\quad+ 2\cos(4 \pi \xi_1) + 2\cos(4 \pi (\xi_1 - \xi_2)) + 2\cos(4 \pi \xi_2) \\
&\quad +  2(-34 + 12 \lambda - \lambda^2)\big(\cos(2 \pi (\xi_1 - 2 \xi_2)) +\cos(2 \pi (2 \xi_1 - \xi_2)) +\cos(2 \pi (\xi_1 + \xi_2))\big) \\
&\quad + 2(-2206 + 1732 \lambda - 503 \lambda^2 + 64 \lambda^3 - 3 \lambda^4)\big(\cos(2 \pi \xi_1) + \cos(2 \pi (\xi_1 - \xi_2)) + \cos(2 \pi \xi_2)\big).
\end{align*}

\paragraph{$(6.3^4)$ Snub hexagonal lattice}

\begin{align*}
&\dd_1(\xi_1,\xi_2) =\\
&\scalemath{0.90}{\begin{pmatrix}
1 & 1 & 1 & 1 & 1 & 1 & 0 & 0 & 0 & 0 & 0 & 0 & 0 & 0 & 0 \\ 
0 & 0 & 0 & 1 & 0 & 0 & e^{2\pi i\xi_2} & 0 & 0 & 0 & 0 & 1 & 0 & 0 & 0 \\
0 & 0 & 1 & 0 & 0 & 0 & 0 & 0 & -1 & 0 & 0 & 0 & 0 & 1 & 0 \\ 
0 & 1 & 0 & 0 & 0 & 0 & 0 & -1 & 0 & 0 & 1 & 0 & 0 & 0 & 0 \\
1 & 0 & 0 & 0 & 0 & 0 & -1 & 0 & 0 & 0 & 0 & 0 & 1 & 0 & 0 \\ 
0 & 0 & 0 & 0 & 0 & 1 & 0 & 0 & e^{-2\pi i\xi_1} & 1 & 0 & 0 & 0 & 0 & 0 \\
0 & 0 & 0 & 0 & 1 & 0 & 0 & e^{2\pi i(\xi_2-\xi_1)} & 0 & 0 & 0 & 0 & 0 & 0 & 1 \\ 
0 & 0 & 0 & 0 & 0 & 0 & 0 & 0 & 0 & 0 & 0 & 0 & e^{2\pi i(\xi_2-\xi_1)} & e^{-2\pi i\xi_1} & 1 \\
0 & 0 & 0 & 0 & 0 & 0 & 0 & 0 & 0 & 1 & e^{-2\pi i\xi_1} & e^{-2\pi i\xi_2} & 0 & 0 & 0 
\end{pmatrix}}
\end{align*}

\begin{align*}
&p(\lambda; \xi_1,\xi_2)\\
&=5034 - 26112 \lambda + 50784 \lambda^2 - 51518 \lambda^3 + 30858 \lambda^4 - 11487 \lambda^5 + 2682 \lambda^6 - 381 \lambda^7 + 30 \lambda^8 - \lambda^9 \\
&\quad+ 2\cos(4 \pi \xi_1)+ 2\cos(4 \pi (\xi_1 - \xi_2)) + 2\cos(4 \pi \xi_2) \\
&\quad +2(-14 + 8 \lambda - \lambda^2)\big(\cos(2 \pi (\xi_1 - 2 \xi_2)) +\cos(2 \pi (2 \xi_1 - \xi_2))
+ \cos(2 \pi (\xi_1 + \xi_2)) \big)  \\
& \quad +  2(-826 + 1752 \lambda - 1503 \lambda^2 + 667 \lambda^3 - 161 \lambda^4 + 20 \lambda^5 - \lambda^6) \\
& \qquad \times \big(\cos(2 \pi \xi_1) + \cos(2 \pi (\xi_1 - \xi_2)) + \cos(2 \pi \xi_2)\big).
\end{align*}

\subsection{The Lieb lattice}\label{subsec:lieb}
 
In this subsection, we study another periodic graph.
It has been called the \emph{Lieb lattice} after some specific investigations of E.~Lieb
in \cite{Li89}, see also \cite{JKHXLL19}.
The graph can be obtained from $\Z^2$ by adding a vertex at the midpoint of each edge, see Figure \ref{fig:lieb}.
Its interest stems from the fact that $-\Delta_\V$ exhibits a flat band; we show that,
for the Dirac--Hodge operator, this flat band is the only one, since the contribution
coming from the faces is purely dispersive.

\begin{figure}[ht]
\centering
\begin{tikzpicture}[scale=1.6,
  corner/.style={circle, fill=black, inner sep=2.2pt},
  mid/.style={circle, fill=white, draw=black, line width=0.7pt, inner sep=1.8pt},
  latedge/.style={line width=0.8pt, black!70},
  fund/.style={fill=black!8}
]

\def\imin{-1}\def\imax{2}
\def\jmin{-1}\def\jmax{2}
\pgfmathtruncatemacro{\imaxp}{\imax+1}
\pgfmathtruncatemacro{\jmaxp}{\jmax+1}

\fill[fund] (0,0) rectangle (1,1);

\foreach \i in {\imin,...,\imax}{
  \foreach \j in {\jmin,...,\jmaxp}{
    \draw[latedge] (\i,\j) -- (\i+0.5,\j) -- (\i+1,\j);
  }
}
\foreach \i in {\imin,...,\imaxp}{
  \foreach \j in {\jmin,...,\jmax}{
    \draw[latedge] (\i,\j) -- (\i,\j+0.5) -- (\i,\j+1);
  }
}

\foreach \i in {\imin,...,\imaxp}{
  \foreach \j in {\jmin,...,\jmaxp}{
    \node[corner] at (\i,\j) {};
  }
}

\foreach \i in {\imin,...,\imax}{
  \foreach \j in {\jmin,...,\jmaxp}{
    \node[mid] at (\i+0.5,\j) {};
  }
}

\foreach \i in {\imin,...,\imaxp}{
  \foreach \j in {\jmin,...,\jmax}{
    \node[mid] at (\i,\j+0.5) {};
  }
}

\draw[line width=1pt] (0,0) rectangle (1,1);
\node[corner] at (0,0) {};
\node[mid] at (0,0.5) {};
\node[mid] at (0.5,0) {};
\node at (-0.17,-0.17) {$\hx_1$};
\node at (-0.28,0.5) {$\hx_2$};
\node at (0.5,-0.28) {$\hx_3$};

\draw[line width=1.3pt] (0,0) -- (1,0);
\draw[line width=1.3pt] (0,0) -- (0,1);

\begin{scope}[xshift=3.3cm, yshift=-0.3cm]
  \draw[-{Stealth[length=2mm]}, line width=0.9pt,blue!80] (0,0) -- (0.8,0) node[right=2pt]{$a_1$}; 
  \draw[-{Stealth[length=2mm]}, line width=0.9pt,blue!80] (0,0) -- (0,0.8) node[above=2pt]{$a_2$};
\end{scope}

\end{tikzpicture}
\caption{The Lieb lattice: vertices of the underlying square lattice $\Z^2$ (black,
type $\hx_1$) together with a vertex added at the midpoint of each edge (white,
types $\hx_2$ and $\hx_3$). The shaded square is a fundamental domain for the
$\Z^2$-action generated by $a_1,a_2$.}
\label{fig:lieb}
\end{figure}

For the Lieb lattice, the quotient graph $\XX$ has $\nv=3$ vertices, $\ne=4$ unoriented
edges, and $\nf=1$ face, in accordance with relation \eqref{eq:euler0}.
We take the three vertices $\hx_1=(0,0)$, $\hx_2=(0,\tfrac12)$, $\hx_3=(\tfrac12,0)$, the four
oriented edges
$$
\he_1=(\hx_1,\hx_2),\quad \he_2=(\hx_1,\hx_3),\quad \he_3=(\hx_2,(a_2)\hx_1),\quad \he_4=(\hx_3,(a_1)\hx_1),
$$
and the single face
$$
\hf=\big(\he_2,\he_4,(a_1)\he_1,(a_1)\he_3,
(a_2)\overline{\he_4},(a_2)\overline{\he_2},\overline{\he_3},\overline{\he_1}\big),
$$
where $a_1$ is the shift by $(1,0)$ and $a_2$ is the shift by $(0,1)$.
A direct application of Propositions \ref{prop_d1xi} and \ref{prop_d2xi} then gives
\begin{equation*}
\dd_0(\xi)=
\begin{pmatrix}
-1 & 1 & 0\\
-1 & 0 & 1\\
e^{2\pi i\xi_2} & -1 & 0\\
e^{2\pi i\xi_1} & 0 & -1
\end{pmatrix},
\qquad
\dd_1(\xi)=
\begin{pmatrix}
e^{2\pi i\xi_1}-1 & 1-e^{2\pi i\xi_2} & e^{2\pi i\xi_1}-1 & 1-e^{2\pi i\xi_2}
\end{pmatrix}.
\end{equation*}
As a consequence, the graph-Laplacian on vertices $-\Delta_\V(\xi)=\dd_0(\xi)^*\dd_0(\xi)$
has characteristic polynomial
\begin{equation*}
p(\lambda; \xi_1,\xi_2)
=(2-\lambda)\Big[\lambda^2-6\lambda+4-2\cos(2\pi\xi_1)-2\cos(2\pi\xi_2)\Big],
\end{equation*}
so that its band functions are
\begin{align*}
\lambda_1(\xi_1,\xi_2)&=2,\\
\lambda_2(\xi_1,\xi_2)&=3-\sqrt{5+2\cos(2\pi\xi_1)+2\cos(2\pi\xi_2)},\\
\lambda_3(\xi_1,\xi_2)&=3+\sqrt{5+2\cos(2\pi\xi_1)+2\cos(2\pi\xi_2)}.
\end{align*}
In particular, $-\Delta_\V$ has a flat band at $2$. Note that the flat band at $2$ is not isolated: it lies at the top of the lower
dispersive band $\lambda_2(\xi_1,\xi_2)=3-\sqrt{5+2\cos(2\pi\xi_1)+2\cos(2\pi\xi_2)}$, which
attains the value $2$ only at $\xi=(\tfrac12,\tfrac12)$, where the flat band and the lower band touch.
The same graph, together with the spectral consequences of this band-touching for a
perturbed operator, was studied in \cite{MiPa25}; see also the isospectral Dirac-type
operator on $\Z^2$ treated there.
 
On the other hand, since $\nf=1$, the Laplacian on faces
$-\Delta_\F(\xi)=\dd_1(\xi)\dd_1(\xi)^*$ is the scalar
\begin{equation}\label{eq:facelieb}
-\Delta_\F(\xi_1,\xi_2)=8-4\cos(2\pi\xi_1)-4\cos(2\pi\xi_2)\ .
\end{equation}
This is twice the standard graph-Laplacian on $\Z^2$, namely the Laplacian of $\Z^2$
with every edge doubled; the doubling is visible in $\dd_1(\xi)$, whose first and third
entries coincide, as do its second and fourth.
In particular, $\sigma(-\Delta_\F)=[0,16]$ consists of a single dispersive band, and
$-\Delta_\F$ has no flat band.
 
We thus conclude from Theorem \ref{theo:Dm} that $D_m$ on the Lieb lattice has exactly
two flat bands, located at $\pm\sqrt{2+m^2}$, both inherited from $-\Delta_\V$.
In contrast with the Kagome and Super-Kagome lattices studied in
Section \ref{subsec:kagomeandsuper}, the faces of the Lieb lattice do not contribute to any new flat band.

\section{Localised eigenvectors}\label{sec_loc}

In this final section we make explicit the connection provided in Theorem \ref{theo:Dm} 
by relating each localised eigenvector corresponding to an eigenvalue $\lambda$ of either $-\Delta_\V$ or $-\Delta_\F$ 
to a pair of localised eigenvectors of $D_m$ corresponding to the eigenvalue $\pm\sqrt{\lambda+m^2}$. 
Recall that for a connected infinite graph, every eigenvalue of $-\Delta_\V$ is strictly positive. 
An analogous result can be obtained for $-\Delta_\F$ by noticing that $-\Delta_\F h\equiv 0$ implies 
$\dd_1^* h \equiv 0$ and hence $h$ must take opposite values in faces sharing an oriented edge. 
Since each planar graph defines a tessellation of the plane, we obtain that $h\equiv 0$.

\begin{Lemma}\label{lem:eigenvectorsDm}
Let $X=(\V,\A,\F)$ be a planar graph with bounded geometry, and let $m\geq0$.
\begin{enumerate}
\item[(i)] Let $\lambda > 0$ and $f\in\ell^2(\V)$ be such that
$
-\Delta_\V f=\dd_0^*\dd_0 f=\lambda f\ .
$
Then the two vectors
$$
\Psi_\pm:=\big(f,\ \alpha_\pm\, \dd_0f,\ 0\big)\in \H_{DH}(X)\ ,
\qquad
\alpha_\pm:=\frac{-m\pm\sqrt{m^2+\lambda}}{\lambda}\ ,$$
satisfy $D_m\Psi_\pm=\pm\sqrt{\lambda+m^2}\,\Psi_\pm$.

\item[(ii)] Let $\nu > 0$ and $h\in\ell^2(\F)$ be such that
$
-\Delta_\F h=\dd_1\dd_1^*h=\nu h\ .
$
Then the two vectors
$$
\Phi_\pm:=\big(0,\ \beta_\pm\, \dd_1^*h,\ h\big)\in \H_{DH}(X)\ ,
\qquad
\beta_\pm:=\frac{-m\pm\sqrt{m^2+\nu}}{\nu}\ ,
$$
satisfy $D_m\Phi_\pm=\pm\sqrt{\nu+m^2}\,\Phi_\pm$.
\end{enumerate}
\end{Lemma}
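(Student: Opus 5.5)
The plan is a direct block computation of $D_m\Psi_\pm$ and $D_m\Phi_\pm$ from the matrix form \eqref{eq:Dmdef}. Two structural facts from Section \ref{sec:defdirac} will be used throughout: the cochain identity $\dd_1\dd_0=0$ (the sum of $f(t(\e))-f(o(\e))$ around a closed cycle telescopes to zero), and its adjoint $\dd_0^*\dd_1^*=0$. Both are implicit in the block-diagonal form of $D^2$ stated there, where the off-diagonal entries $\dd_1\dd_0$ and $\dd_0^*\dd_1^*$ vanish. Write $s:=\sqrt{\lambda+m^2}$.

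For (i), applying \eqref{eq:Dmdef} to $(f,\alpha\dd_0 f,0)$ gives the three components:
\begin{align*}
&m f+\alpha\,\dd_0^*\dd_0 f=(m+\alpha\lambda)f,\\
&\dd_0 f-m\alpha\,\dd_0 f=(1-m\alpha)\dd_0 f,\\
&\alpha\,\dd_1\dd_0 f=0.
\end{align*}
Hence $\Psi$ is an eigenvector with eigenvalue $\mu$ as soon as $m+\alpha\lambda=\mu$ and $1-m\alpha=\mu\alpha$. With $\alpha_\pm=(-m\pm s)/\lambda$, the first condition gives $\mu=\pm s$ directly. For the second, I check $\alpha_\pm(m\pm s)=(s^2-m^2)/\lambda=1$ (the same computation for both signs), which is exactly $1-m\alpha_\pm=\pm s\,\alpha_\pm$.

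For (ii) the computation is symmetric. Applying \eqref{eq:Dmdef} to $(0,\beta\dd_1^*h,h)$ gives the components:
\begin{align*}
&\beta\,\dd_0^*\dd_1^*h=0,\\
&-m\beta\,\dd_1^*h+\dd_1^*h=(1-m\beta)\dd_1^*h,\\
&\beta\,\dd_1\dd_1^*h+m h=(\beta\nu+m)h.
\end{align*}
These are the same two scalar equations as in (i), with $\lambda$ replaced by $\nu$, and they are solved by $\beta_\pm$ in the same way.

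The only point needing care is the vanishing of the cross terms $\dd_1\dd_0 f$ and $\dd_0^*\dd_1^*h$, and it is immediate from the identity $\dd_1\dd_0=0$. I would also remark that $\lambda,\nu>0$ makes $\alpha_\pm,\beta_\pm$ well defined. Moreover $\Psi_\pm\neq0$ whenever $f\neq0$, and $\Phi_\pm\neq0$ whenever $h\neq0$, so these are genuine eigenvectors. Boundedness of $\dd_0$ and $\dd_1$ ensures that all vectors involved lie in $\H_{DH}(X)$.
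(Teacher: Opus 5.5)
Your proof is correct and follows the paper's argument: you apply $D_m$ blockwise to $(f,\alpha\,\dd_0 f,0)$ and $(0,\beta\,\dd_1^*h,h)$, kill the cross terms via $\dd_1\dd_0=0$ and its adjoint, and reduce to the scalar conditions $m+\alpha\lambda=\mu$ and $1-m\alpha=\mu\alpha$. The differences are only cosmetic: the paper derives $\alpha_\pm$ by solving $\lambda\alpha^2+2m\alpha-1=0$ whereas you verify the given $\alpha_\pm$ directly, and you write out part (ii), which the paper leaves as ``similar''.
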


\begin{proof}
(i)
We look for an eigenvector of the form  $(f,\alpha\dd_0f,0)$, for an unknown $\alpha\in\R$. One computes directly, using \eqref{eq:Dmdef},
$$
D_m\begin{pmatrix} f\\ \alpha\, \dd_0f\\0\end{pmatrix}
=\begin{pmatrix} m & \dd_0^{*} & 0\\
\dd_0 & -m &\dd_1^* \\
0 & \dd_1 & m
\end{pmatrix}
\begin{pmatrix} f\\ \alpha\, \dd_0f\\0\end{pmatrix}
=\begin{pmatrix} (m+\alpha\lambda)f\\ (1-m\alpha)\,\dd_0f\\ \alpha\,\dd_1\dd_0f\end{pmatrix}\ .
$$
The last entry vanishes since $\dd_1\dd_0=0$. The first two blocks impose
$$
m+\alpha\lambda=\frac{1-m\alpha}{\alpha}\ ,
\qquad\text{i.e.}\qquad
\lambda\alpha^2+2m\alpha-1=0\ .
$$
Solving this quadratic equation in $\alpha$ gives $\alpha_\pm=\dfrac{-m\pm\sqrt{m^2+\lambda}}{\lambda}$, and the corresponding eigenvalue is
$$
m+\alpha_\pm\lambda=m+\big(-m\pm\sqrt{m^2+\lambda}\big)=\pm\sqrt{\lambda+m^2}\ ,
$$
as announced.

(ii) The proof is similar, by looking for an eigenvector of the form $(0,\beta\dd_1^*h,h)$.
\end{proof}

We now illustrate this result by exhibiting some localised eigenvectors for Kagome and super-Kagome Lattices.
We start by presenting an eigenstate for the Kagome lattice. It is known that an eigenstate of $-\Delta_\V$ is provided by a function supported on a hexagon, alternating $1$ and $-1$. Applying Lemma \ref{lem:eigenvectorsDm}, we obtain $\Psi_\pm$ given by Figure \ref{fig:PsiKagome}.

\newcommand{\KagomeBG}{
  \begin{scope}[opacity=0.20, scale=.3]
    \foreach \i in {-2,...,6}{
      \foreach \j in {-2,...,6}{
        \pgfmathsetmacro{\ii}{2*\i}
        \pgfmathsetmacro{\jj}{4*\j*0.86602540378}
        \begin{scope}[xshift=\ii cm,yshift=\jj cm]
          \draw(0:1)--(60:1)--(120:1)--(180:1)--(240:1)--(300:1)--(360:1);
          \begin{scope}[xshift=1cm,yshift={2*0.86602540378cm}]
            \draw(0:1)--(60:1)--(120:1)--(180:1)--(240:1)--(300:1)--(360:1);
          \end{scope}
        \end{scope}
      }
    }
  \end{scope}
}

\newcommand{\HexCoords}{
  \coordinate (v1) at (0.15, 0.26);
  \coordinate (v2) at (0.45, 0.26);
  \coordinate (v3) at (0.60, 0.52);
  \coordinate (v4) at (0.45, 0.78);
  \coordinate (v5) at (0.15, 0.78);
  \coordinate (v6) at (0.00, 0.52);
  \coordinate (w1) at (0.30, 0.00);
  \coordinate (w2) at (0.75, 0.26);
  \coordinate (w3) at (0.75, 0.78);
  \coordinate (w4) at (0.30, 1.04);
  \coordinate (w5) at (-0.15, 0.78);
  \coordinate (w6) at (-0.15, 0.26);
}

\tikzset{
  vert/.style={circle, fill=black, inner sep=1.8pt},
  hexedge/.style={line width=1.0pt,
    decoration={markings,mark=at position 0.55 with {\arrow{Stealth[length=2.2mm]}}},
    postaction=decorate},
  spoke/.style={line width=0.8pt,
    decoration={markings,mark=at position 0.6575 with {\arrow{Stealth[length=1.8mm]}}},
    postaction=decorate},
  spokerev/.style={line width=0.8pt,
  decoration={markings,mark=at position 0.4175 with {\arrow{Stealth[length=1.8mm]}}},
  postaction=decorate},  
  lbl/.style={font=\Large},
  slbl/.style={font=\Large}
}

\newcommand{\DrawSpokes}{
  \draw[spoke] (v2)--(w1);
  \draw[spokerev] (w1)--(v1);
  \draw[spoke] (v3)--(w2);
  \draw[spokerev] (w2)--(v2);
  \draw[spoke] (v4)--(w3);
  \draw[spokerev] (w3)--(v3);
  \draw[spoke] (v5)--(w4);
  \draw[spokerev] (w4)--(v4);
  \draw[spoke] (v6)--(w5);
  \draw[spokerev] (w5)--(v5);
  \draw[spoke] (v1)--(w6);
  \draw[spokerev] (w6)--(v6);
}
\newcommand{\SpokeLabelsB}{
  \node[slbl] at (0.4365,0.1465) {$+1$};
  \node[slbl] at (0.1635,0.1465) {$+1$};
  \node[slbl] at (0.6915,0.4515) {$-1$};
  \node[slbl] at (0.5550,0.2150) {$-1$};
  \node[slbl] at (0.5550,0.8250) {$+1$};
  \node[slbl] at (0.6915,0.5885) {$+1$};
  \node[slbl] at (0.1635,0.8935) {$-1$};
  \node[slbl] at (0.4365,0.8935) {$-1$};
  \node[slbl] at (-0.0915,0.5885) {$+1$};
  \node[slbl] at (0.0450,0.8250) {$+1$};
  \node[slbl] at (0.0450,0.2150) {$-1$};
  \node[slbl] at (-0.0915,0.4515) {$-1$};
}
\newcommand{\SpokeLabelsC}{
  \node[slbl] at (0.4365,0.1465) {$+\alpha_\pm$};
  \node[slbl] at (0.1635,0.1465) {$+\alpha_\pm$};
  \node[slbl] at (0.6915,0.4515) {$-\alpha_\pm$};
  \node[slbl] at (0.5550,0.2150) {$-\alpha_\pm$};
  \node[slbl] at (0.5550,0.8250) {$+\alpha_\pm$};
  \node[slbl] at (0.6915,0.5885) {$+\alpha_\pm$};
  \node[slbl] at (0.1435,0.8935) {$-\alpha_\pm$};
  \node[slbl] at (0.4365,0.8935) {$-\alpha_\pm$};
  \node[slbl] at (-0.0915,0.5885) {$+\alpha_\pm$};
  \node[slbl] at (0.0450,0.8250) {$+\alpha_\pm$};
  \node[slbl] at (0.0450,0.2150) {$-\alpha_\pm$};
  \node[slbl] at (-0.0915,0.4515) {$-\alpha_\pm$};
}

\begin{figure}[!htb]
\centering
\begin{subfigure}{0.31\textwidth}
\centering
\resizebox{\linewidth}{!}{\begin{tikzpicture}[scale=8]
  \useasboundingbox (-0.24,-0.05) rectangle (0.84,1.09);
  \clip (-0.24,-0.05) rectangle (0.84,1.09);
  \KagomeBG
  \HexCoords
  \draw[line width=1.0pt] (v1)--(v2)--(v3)--(v4)--(v5)--(v6)--cycle;
  \node[vert,label={[font=\Large]above right:$+1$}] at (v1) {};
  \node[vert,label={[font=\Large]above left:$-1$}] at (v2) {};
  \node[vert,label={[font=\Large]left:$+1$}] at (v3) {};
  \node[vert,label={[font=\Large]below left:$-1$}] at (v4) {};
  \node[vert,label={[font=\Large]below right:$+1$}] at (v5) {};
  \node[vert,label={[font=\Large]right:$-1$}] at (v6) {};
\end{tikzpicture}}
\caption{$f$}
\end{subfigure}
\hfill
\begin{subfigure}{0.31\textwidth}
\centering
\resizebox{\linewidth}{!}{\begin{tikzpicture}[scale=8]
  \useasboundingbox (-0.24,-0.05) rectangle (0.84,1.09);
  \clip (-0.24,-0.05) rectangle (0.84,1.09);
  \KagomeBG
  \HexCoords
  \draw[hexedge] (v1)--(v2);
  \draw[hexedge] (v2)--(v3);
  \draw[hexedge] (v3)--(v4);
  \draw[hexedge] (v4)--(v5);
  \draw[hexedge] (v5)--(v6);
  \draw[hexedge] (v6)--(v1);
  \DrawSpokes
  \SpokeLabelsB
  \foreach \i in {1,...,6}{ \node[vert,fill=gray!45,inner sep=1.4pt] at (v\i) {}; }
  \foreach \i in {1,...,6}{ \node[circle,fill=gray!45,inner sep=1.1pt] at (w\i) {}; }
  \node[lbl] at (0.3000,0.2100) {$-2$};
  \node[lbl] at (0.5683,0.3650) {$+2$};
  \node[lbl] at (0.5683,0.6750) {$-2$};
  \node[lbl] at (0.3000,0.8300) {$+2$};
  \node[lbl] at (0.0317,0.6750) {$-2$};
  \node[lbl] at (0.0317,0.3650) {$+2$};
\end{tikzpicture}}
\caption{$\dd_0 f$}
\end{subfigure}
\hfill
\begin{subfigure}{0.31\textwidth}
\centering
\resizebox{\linewidth}{!}{\begin{tikzpicture}[scale=8]
  \useasboundingbox (-0.24,-0.05) rectangle (0.84,1.09);
  \clip (-0.24,-0.05) rectangle (0.84,1.09);
  \KagomeBG
  \HexCoords
  \draw[hexedge] (v1)--(v2);
  \draw[hexedge] (v2)--(v3);
  \draw[hexedge] (v3)--(v4);
  \draw[hexedge] (v4)--(v5);
  \draw[hexedge] (v5)--(v6);
  \draw[hexedge] (v6)--(v1);
  \DrawSpokes
  \SpokeLabelsC
  \node[vert,label={[font=\Large]above right:$+1$}] at (v1) {};
  \node[vert,label={[font=\Large]above left:$-1$}] at (v2) {};
  \node[vert,label={[font=\Large]left:$+1$}] at (v3) {};
  \node[vert,label={[font=\Large]below left:$-1$}] at (v4) {};
  \node[vert,label={[font=\Large]below right:$+1$}] at (v5) {};
  \node[vert,label={[font=\Large]right:$-1$}] at (v6) {};
  \node[lbl] at (0.3000,0.2100) {$-2\alpha_\pm$};
  \node[lbl] at (0.5793,0.3600) {$2\alpha_\pm$};
  \node[lbl] at (0.5793,0.6850) {$-2\alpha_\pm$};
  \node[lbl] at (0.3000,0.8300) {$2\alpha_\pm$};
  \node[lbl] at (0.0037,0.6850) {$-2\alpha_\pm$};
  \node[lbl] at (0.0187,0.3600) {$2\alpha_\pm$};
\end{tikzpicture}}
\caption{$\Psi_\pm$}\label{fig:PsiKagome}
\end{subfigure}
\caption{The flat-band states for the Kagome lattice: (a) the eigenvector $f$ of $-\Delta_\V$ with eigenvalue $6$; (b) $\dd_0 f$, with the edge orientation indicated by the arrows; (c) the resulting eigenvectors $\Psi_\pm=(f,\alpha_\pm\dd_0f,0)$ of $D_m$, with $\alpha_\pm=\dfrac{-m\pm\sqrt{m^2+6}}{6}$.}
\label{fig:kagome_flatband}
\end{figure}

To give an example for the Super-kagome lattice, we start with an eigenstate for $-\Delta_\F$. This can be easily obtained by alterning orientations of triangles around a single dodecagon. The construction is presented in Figure \ref{fig:superkagome_flatband},
with explanations provided in the caption.

\newcommand{\DodecaCoords}{
  \coordinate (P0) at (1.8660,0.5000);
  \coordinate (P1) at (1.3660,1.3660);
  \coordinate (P2) at (0.5000,1.8660);
  \coordinate (P3) at (-0.5000,1.8660);
  \coordinate (P4) at (-1.3660,1.3660);
  \coordinate (P5) at (-1.8660,0.5000);
  \coordinate (P6) at (-1.8660,-0.5000);
  \coordinate (P7) at (-1.3660,-1.3660);
  \coordinate (P8) at (-0.5000,-1.8660);
  \coordinate (P9) at (0.5000,-1.8660);
  \coordinate (P10) at (1.3660,-1.3660);
  \coordinate (P11) at (1.8660,-0.5000);
  \coordinate (Q0) at (2.3660,1.3660);
  \coordinate (Q1) at (-0.0000,2.7321);
  \coordinate (Q2) at (-2.3660,1.3660);
  \coordinate (Q3) at (-2.3660,-1.3660);
  \coordinate (Q4) at (-0.0000,-2.7321);
  \coordinate (Q5) at (2.3660,-1.3660);
}

\tikzset{
  dodec/.style={line width=1.0pt},
  tri/.style={line width=1.0pt},
  trifill/.style={fill=white},
  hexedge/.style={line width=0.9pt,
    decoration={markings,mark=at position 0.55 with {\arrow{Stealth[length=2mm]}}},
    postaction=decorate},
  lbl/.style={font=\small},
  slbl/.style={font=\small},
  rot/.style={font=\normalsize}
}

\newcommand{\DrawSkeleton}{
  \draw[dodec, fill=black!8] (P0)--(P1)--(P2)--(P3)--(P4)--(P5)--(P6)--(P7)--(P8)--(P9)--(P10)--(P11)--cycle;
  \draw[tri, trifill] (P0)--(P1)--(Q0)--cycle;
  \draw[tri, trifill] (P2)--(P3)--(Q1)--cycle;
  \draw[tri, trifill] (P4)--(P5)--(Q2)--cycle;
  \draw[tri, trifill] (P6)--(P7)--(Q3)--cycle;
  \draw[tri, trifill] (P8)--(P9)--(Q4)--cycle;
  \draw[tri, trifill] (P10)--(P11)--(Q5)--cycle;
  \draw[gray!45] (Q0) -- (2.6691,1.5410);
  \draw[gray!45] (Q1) -- (-0.0000,3.0821);
  \draw[gray!45] (Q2) -- (-2.6691,1.5410);
  \draw[gray!45] (Q3) -- (-2.6691,-1.5410);
  \draw[gray!45] (Q4) -- (-0.0000,-3.0821);
  \draw[gray!45] (Q5) -- (2.6691,-1.5410);
}

\newcommand{\DrawArrowsB}{
  \draw[hexedge] (P0)--(Q0); \draw[hexedge] (Q0)--(P1); \draw[hexedge] (P1)--(P0);
  \draw[hexedge] (P2)--(P3); \draw[hexedge] (P3)--(Q1); \draw[hexedge] (Q1)--(P2);
  \draw[hexedge] (P4)--(Q2); \draw[hexedge] (Q2)--(P5); \draw[hexedge] (P5)--(P4);
  \draw[hexedge] (P6)--(P7); \draw[hexedge] (P7)--(Q3); \draw[hexedge] (Q3)--(P6);
  \draw[hexedge] (P8)--(Q4); \draw[hexedge] (Q4)--(P9); \draw[hexedge] (P9)--(P8);
  \draw[hexedge] (P10)--(P11); \draw[hexedge] (P11)--(Q5); \draw[hexedge] (Q5)--(P10);
}

\newcommand{\CircArrows}{
  \node[rot] at (1.8660,1.0774) {$\circlearrowleft$};
  \node[rot] at (-0.0000,2.1547) {$\circlearrowright$};
  \node[rot] at (-1.8660,1.0774) {$\circlearrowleft$};
  \node[rot] at (-1.8660,-1.0774) {$\circlearrowright$};
  \node[rot] at (-0.0000,-2.1547) {$\circlearrowleft$};
  \node[rot] at (1.8660,-1.0774) {$\circlearrowright$};
}

\newcommand{\ZeroLabels}{
  \node[lbl] at (0,0) {$0$};
  \node[lbl] at (2.9722,1.7160) {$0$};
  \node[lbl] at (-0.0000,3.4321) {$0$};
  \node[lbl] at (-2.9722,1.7160) {$0$};
  \node[lbl] at (-2.9722,-1.7160) {$0$};
  \node[lbl] at (-0.0000,-3.4321) {$0$};
  \node[lbl] at (2.9722,-1.7160) {$0$};
}

\begin{figure}[!ht]
\centering
\begin{subfigure}{0.31\textwidth}
\centering
\resizebox{\linewidth}{!}{\begin{tikzpicture}[scale=1]
  \DodecaCoords
  \DrawSkeleton
  \CircArrows
\end{tikzpicture}}
\caption{$h$}
\end{subfigure}
\hfill
\begin{subfigure}{0.31\textwidth}
\centering
\resizebox{\linewidth}{!}{\begin{tikzpicture}[scale=1]
  \DodecaCoords
  \draw[dodec, fill=black!8] (P0)--(P1)--(P2)--(P3)--(P4)--(P5)--(P6)--(P7)--(P8)--(P9)--(P10)--(P11)--cycle;
  \draw[gray!45] (Q0) -- (2.6691,1.5410);
  \draw[gray!45] (Q1) -- (-0.0000,3.0821);
  \draw[gray!45] (Q2) -- (-2.6691,1.5410);
  \draw[gray!45] (Q3) -- (-2.6691,-1.5410);
  \draw[gray!45] (Q4) -- (-0.0000,-3.0821);
  \draw[gray!45] (Q5) -- (2.6691,-1.5410);
  \DrawArrowsB
\end{tikzpicture}}
\caption{$\dd_1^*h$}
\end{subfigure}
\hfill
\begin{subfigure}{0.31\textwidth}
\centering
\resizebox{\linewidth}{!}{\begin{tikzpicture}[scale=1]
  \DodecaCoords
  \draw[dodec, fill=black!8] (P0)--(P1)--(P2)--(P3)--(P4)--(P5)--(P6)--(P7)--(P8)--(P9)--(P10)--(P11)--cycle;
  \draw[gray!45] (Q0) -- (2.6691,1.5410);
  \draw[gray!45] (Q1) -- (-0.0000,3.0821);
  \draw[gray!45] (Q2) -- (-2.6691,1.5410);
  \draw[gray!45] (Q3) -- (-2.6691,-1.5410);
  \draw[gray!45] (Q4) -- (-0.0000,-3.0821);
  \draw[gray!45] (Q5) -- (2.6691,-1.5410);
  \DrawArrowsB
  \CircArrows
  \node[slbl] at (1.3389,0.7730) {$\beta_\pm$};
  \node[slbl] at (1.8660,1.6860) {$\beta_\pm$};
  \node[slbl] at (2.3932,0.7730) {$\beta_\pm$};
  \node[slbl] at (-0.0000,1.5460) {$\beta_\pm$};
  \node[slbl] at (-0.5271,2.4590) {$\beta_\pm$};
  \node[slbl] at (0.5271,2.4590) {$\beta_\pm$};
  \node[slbl] at (-1.3389,0.7730) {$\beta_\pm$};
  \node[slbl] at (-2.3932,0.7730) {$\beta_\pm$};
  \node[slbl] at (-1.8660,1.6860) {$\beta_\pm$};
  \node[slbl] at (-1.3389,-0.7730) {$\beta_\pm$};
  \node[slbl] at (-1.8660,-1.6860) {$\beta_\pm$};
  \node[slbl] at (-2.3932,-0.7730) {$\beta_\pm$};
  \node[slbl] at (-0.0000,-1.5460) {$\beta_\pm$};
  \node[slbl] at (0.5271,-2.4590) {$\beta_\pm$};
  \node[slbl] at (-0.5271,-2.4590) {$\beta_\pm$};
  \node[slbl] at (1.3389,-0.7730) {$\beta_\pm$};
  \node[slbl] at (2.3932,-0.7730) {$\beta_\pm$};
  \node[slbl] at (1.8660,-1.6860) {$\beta_\pm$};
\end{tikzpicture}}
\caption{$\Phi_\pm$}
\end{subfigure}
\caption{The flat-band states for the Super-Kagome lattice: (a) the eigenvector $h$ of $-\Delta_\F$ with
eigenvalue $3$, the orientation of each triangular face is represented and the value $1$ (not represented) is attached to each of them;
(b) $\dd_1^*h$: each triangle's three edges are oriented according to its own
orientation from (a), and all carry the same value $1$ (not represented); (c) the resulting eigenvectors
$\Phi_\pm=(0,\beta_\pm\dd_1^*h,h)$ of $D_m$,  with the value $1$ attached to each triangular face (not represented), and with
$\beta_\pm=\dfrac{-m\pm\sqrt{m^2+3}}{3}$ .}
\label{fig:superkagome_flatband}
\end{figure}

\appendix
\section{Duality approach}

In this appendix, we mention an alternative approach for studying the spectrum of Dirac--Hodge
operators on planar graphs based on duality. Indeed, studying the operator $-\Delta_\F$ on a planar graph
is equivalent to studying the more common operator $-\Delta_{\V}$ on its dual graph. 
Let us fix the setting, by using the notations introduced in Section \ref{subsec_planar}.

\begin{Definition}[Dual graph]\label{def:dual}
Let $(\V,\E,\CC)$ be the unoriented data for the planar graph $X=(\V,\A,\F)$.
The \emph{dual graph} of $X$ is the planar graph
$X^*=(\V^*,\A^*,\F^*)$ defined by the following unoriented data
$(\V^*,\E^*,\CC^*)$~:
\begin{itemize}
\item The vertex set of $X^*$ is the set of unoriented faces of $X$, \ie $\V^*:=\CC$.
\item For any $\e\in \E$ such that $\e\in\f_1$ and $\e\in\f_2$, with $\f_1,\f_2 \in \CC$,
an unoriented edge $\e^*\in \E^*$ joining the vertices $\f_1^*$ and $\f_2^*$ is created. 
\item For each vertex $\x\in\V$, the cyclic order of edges belonging to $\E_\x$
defines a chordless cycle in $X^*$, and consequently a unique element of $\CC^*$.
\end{itemize}
\end{Definition}

Note that the third item leads to a one-to-one correspondence between $\V$ and $\CC^*$.
As expected, one gets that $X^{**}=X$. An example of a graph
and its dual graph is presented in Figure \ref{fig:kagomedual}.
Then, by comparing the expression in \eqref{eq:Deltavertices} with the expression in \eqref{eq:Deltafaces} 
we obtain the following statement.

\begin{figure}[!ht]
\centering
\resizebox{\textwidth}{!}{%
\begin{tikzpicture}[scale=1.6]

\def\sq{1.73205}   
\def\sqh{0.86603}  
\def\sqsi{0.28868} 
\def\sqfi{1.44338} 

\def\xmin{0.3}
\def\xmax{6.7}
\def\ymin{1.2}
\def\ymax{6.3}

\begin{scope}
  \clip (\xmin,\ymin) rectangle (\xmax,\ymax);
  \foreach \ii in {-2,...,4}{
    \foreach \jj in {-2,...,4}{
      \pgfmathsetmacro{\Ax}{\ii*2+\jj}
      \pgfmathsetmacro{\Ay}{\jj*\sq}
      \pgfmathsetmacro{\Bx}{\ii*2+\jj+1}
      \pgfmathsetmacro{\Cx}{\ii*2+\jj+0.5}
      \pgfmathsetmacro{\Cy}{\jj*\sq+\sqh}
      \pgfmathsetmacro{\Axr}{\ii*2+\jj+2}
      \pgfmathsetmacro{\Axu}{\ii*2+\jj+1}
      \pgfmathsetmacro{\Ayu}{(\jj+1)*\sq}
      \pgfmathsetmacro{\Bxu}{\ii*2+\jj}
      \draw[thick] (\Ax,\Ay)--(\Bx,\Ay);
      \draw[thick] (\Ax,\Ay)--(\Cx,\Cy);
      \draw[thick] (\Bx,\Ay)--(\Cx,\Cy);
      \draw[thick] (\Bx,\Ay)--(\Axr,\Ay);
      \draw[thick] (\Cx,\Cy)--(\Axu,\Ayu);
      \draw[thick] (\Cx,\Cy)--(\Bxu,\Ayu);
    }
  }
  \foreach \ii in {-1,...,4}{
    \foreach \jj in {-1,...,4}{
      \pgfmathsetmacro{\Ax}{\ii*2+\jj}
      \pgfmathsetmacro{\Ay}{\jj*\sq}
      \pgfmathsetmacro{\Bx}{\ii*2+\jj+1}
      \pgfmathsetmacro{\Cx}{\ii*2+\jj+0.5}
      \pgfmathsetmacro{\Cy}{\jj*\sq+\sqh}
      \filldraw[black] (\Ax,\Ay) circle (1.8pt);
      \filldraw[black] (\Bx,\Ay) circle (1.8pt);
      \filldraw[black] (\Cx,\Cy) circle (1.8pt);
    }
  }
\end{scope}

\begin{scope}[xshift=8.0cm]
  \clip (\xmin,\ymin) rectangle (\xmax,\ymax);
  \foreach \ii in {-2,...,4}{
    \foreach \jj in {-2,...,4}{
      \pgfmathsetmacro{\Tux}{\ii*2+\jj+0.5}
      \pgfmathsetmacro{\Tuy}{\jj*\sq+\sqsi}
      \pgfmathsetmacro{\Tdx}{\ii*2+\jj+0.5}
      \pgfmathsetmacro{\Tdy}{\jj*\sq+\sqfi}
      \pgfmathsetmacro{\Hcx}{\ii*2+\jj+1.5}
      \pgfmathsetmacro{\Hcy}{\jj*\sq+\sqh}
      \pgfmathsetmacro{\Hbx}{\ii*2+\jj+0.5}
      \pgfmathsetmacro{\Hby}{(\jj-1)*\sq+\sqh}
      \pgfmathsetmacro{\Hlx}{\ii*2+\jj-0.5}
      \pgfmathsetmacro{\Hly}{\jj*\sq+\sqh}
      \pgfmathsetmacro{\Hux}{\ii*2+\jj+0.5}
      \pgfmathsetmacro{\Huy}{(\jj+1)*\sq+\sqh}
      \draw[thick] (\Tux,\Tuy)--(\Hbx,\Hby);
      \draw[thick] (\Tux,\Tuy)--(\Hcx,\Hcy);
      \draw[thick] (\Tux,\Tuy)--(\Hlx,\Hly);
      \draw[thick] (\Tdx,\Tdy)--(\Hcx,\Hcy);
      \draw[thick] (\Tdx,\Tdy)--(\Hux,\Huy);
      \draw[thick] (\Tdx,\Tdy)--(\Hlx,\Hly);
    }
  }
  \foreach \ii in {-2,...,4}{
    \foreach \jj in {-2,...,4}{
      \pgfmathsetmacro{\Tux}{\ii*2+\jj+0.5}
      \pgfmathsetmacro{\Tuy}{\jj*\sq+\sqsi}
      \pgfmathsetmacro{\Tdx}{\ii*2+\jj+0.5}
      \pgfmathsetmacro{\Tdy}{\jj*\sq+\sqfi}
      \pgfmathsetmacro{\Hx}{\ii*2+\jj+1.5}
      \pgfmathsetmacro{\Hy}{\jj*\sq+\sqh}
      \filldraw[black] (\Tux,\Tuy) circle (1.8pt);
      \filldraw[black] (\Tdx,\Tdy) circle (1.8pt);
      \filldraw[black] (\Hx,\Hy)   circle (1.8pt);
    }
  }
\end{scope}
\end{tikzpicture}}
\caption{The Kagome lattice $(3.6)^2$ and its dual, the Rhombille lattice. The Rhombille is a bipartite graph consisting of vertices corresponding to hexagons, which are connected to six vertices corresponding to triangles. The ones corresponding to triangles are each connected to three vertices corresponding to hexagons.}
\label{fig:kagomedual}
\end{figure}

\begin{Proposition}\label{prop_dual}
For any planar graph $X$, its Laplacian on faces $-\Delta_\F$ is unitarily equivalent to the graph-Laplacian $-\Delta_{\V^*}$ 
of its dual graph.
\end{Proposition}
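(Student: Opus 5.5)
The idea is to build an explicit unitary $W\colon \ell^2(\F)\to\ell^2(\V^*)$ by fixing an orientation of every face, and then to compare the matrix entries of $\dd_1\dd_1^*$ given in \eqref{eq:Deltafaces} with those of $\dd_0^*\dd_0$ in \eqref{eq:Deltavertices} for the dual graph $X^*$.

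\emph{The unitary.} For each unoriented face $\f\in\CC$ choose the representative $\f^+\in\F$ with the counterclockwise orientation of the planar embedding. Set $[Wh](\f^*):=h(\f^+)$. Since $h(\overline{\f})=-h(\f)$ and the scalar product on $\ell^2(\F)$ carries the factor $\frac12$, the map $W$ is isometric and onto. Hence it is unitary.

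\emph{Comparing the entries.} By \eqref{eq:Deltafaces}, the diagonal entry of $-\Delta_\F$ is $\deg_2(\f)$. This equals the number of edges bounding $\f$, which is exactly the degree of the vertex $\f^*$ in $X^*$, so the diagonal entries of the two operators agree.

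For the off-diagonal entries, take an unoriented edge $\e$ shared by two distinct faces $\f_1,\f_2$. If $\e$ is traversed by $\f_1^+$ in one direction, then the counterclockwise boundary $\f_2^+$ traverses it in the opposite direction, because the two faces lie on opposite sides of $\e$. In \eqref{eq:Deltafaces} this is exactly the relation $\f_2^+\sim\f_1^+$ through $\e$. It contributes $-h(\f_2^+)=-[Wh](\f_2^*)$, which matches the term $-f(t(\e^*))$ in \eqref{eq:Deltavertices}. Summing over all edges of $\f_1$, that is over all $\e^*\in\A^*_{\f_1^*}$, gives $W(-\Delta_\F)W^*=-\Delta_{\V^*}$. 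Multiple edges between the same two faces, or an edge bordered on both sides by the same face, are counted edge by edge on both sides, so they cause no problem as long as both Laplacians are read with multiplicities.

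\emph{Main obstacle.} The delicate point is the sign convention: one has to show that, with coherent orientations, every neighbour term really enters with a minus sign, so that no signed (magnetic) Laplacian appears. Planarity resolves this. The plane is orientable, so a counterclockwise orientation can be chosen consistently for all faces, and adjacent faces then induce opposite orientations on their common edge. That is exactly the convention $\f'\sim\f$ used in \eqref{eq:Deltafaces}.

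Bounded geometry of $X$ gives bounded degrees of $X^*$, so both operators are bounded and the identity, checked on finitely supported functions, extends by continuity to the whole space.
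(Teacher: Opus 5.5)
Your proposal is correct and takes the same route as the paper. The paper's proof consists only of comparing \eqref{eq:Deltavertices} for $X^*$ with \eqref{eq:Deltafaces}, and you make that comparison rigorous via the explicit unitary $h\mapsto h(\f^+)$ built from counterclockwise representatives. Your remark that neighbours must be counted with multiplicity is the right reading: it is needed, for instance, for the Lieb lattice, whose dual has doubled edges and where $\deg_2(\f)=8$ exceeds the number of distinct neighbouring faces.
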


Let us finally mention that this statement leads directly to the result about the spectra for the Dirac--Hodge
operators on a subfamily of Archimedean lattices. 
More precisely, for the three Platonic lattices (the square lattice $4^4$, the triangular lattices $3^6$, and the hexagonal lattice $6^3$)
the investigations can be shortened by taking the duality argument of Proposition \ref{prop_dual} into account.
Indeed, observe firstly that $(4^4)^*=(4^4)$ and that $(3^6)^*=(6^3)$.
Thus, $-\Delta_\F\big(4^4\big)$ is unitarily equivalent to $-\Delta_\V\big(4^4\big)$, 
$-\Delta_\F\big(3^6\big)$ is unitarily equivalent to $-\Delta_\V\big(6^3\big)$, 
and reciprocally $-\Delta_\F\big(6^3\big)$ is unitarily equivalent to $-\Delta_\V\big(3^6\big)$.
As a consequence, the absence of flat bands for $-\Delta_\V$ already obtained in \cite{KeTaWi24}
lead to the absence of flat bands for the operators $-\Delta_\F$, 
and a fortiori for the corresponding operators $D_m$.

\end{document}